\newcommand{\pushright}[1]{\ifmeasuring@#1\else\omit\hfill$\displaystyle#1$\fi\ignorespaces}
\newcommand{\pushleft}[1]{\ifmeasuring@#1\else\omit$\displaystyle#1$\hfill\fi\ignorespaces}
\newcommand{\Z}{\mathbb{Z}}
\newcommand{\N}{\mathbb{N}}
\newcommand{\R}{\mathbb{R}}
\newcommand{\bbL}{\mathbb{L}}
\newcommand{\torusd}{\mathbb{T}^d}
\newcommand{\Zd}{\mathbb{Z}^d}
\newcommand{\betac}{\beta_{\mathrm{\scriptscriptstyle c}}}
\newcommand{\normI}[1]{\left\|#1\right\|_{\scriptscriptstyle 1}}
\newcommand{\given}{\,|\,}
\newcommand{\interior}{\mathrm{\scriptscriptstyle int}}
\newcommand{\exterior}{\mathrm{\scriptscriptstyle ext}}
\newcommand{\edge}{\mathrm{\scriptscriptstyle edge}}
\newcommand{\lrangle}[1]{\langle #1\rangle}
\newcommand{\pressure}{\psi}
\newcommand{\e}{\mathbb{E}}
\newcommand{\p}{\mathbb{P}}
\newcommand{\clusterSet}{\mathrm{cl}}
\renewcommand{\nleftrightarrow}{\mathrel{\ooalign{$\leftrightarrow$\cr\hidewidth$/$\hidewidth}}}
\newcommand{\nlongleftrightarrow}{\mathrel{\ooalign{$\longleftrightarrow$\cr\hidewidth$/$\hidewidth}}}
\newcommand{\Ursell}{U}
\newcommand{\weight}{\textnormal{w}}
\newcommand{\partition}{\textnormal{Part}}
\newcommand{\calF}{\mathcal{F}}
\newcommand{\calP}{\mathcal{P}}
\newcommand{\bfP}{\mathbf{P}}
\newcommand{\support}{\textnormal{SUP}}
\newcommand{\update}{\textnormal{UPD}}
\newcommand{\killedUpdate}{\textnormal{KUPD}}
\newcommand{\killedUpdateAbrv}{\textnormal{K}}
\theoremstyle{plain}
\newtheorem{theorem}{Theorem}[section]
\newtheorem{lemma}[theorem]{Lemma}
\newtheorem{claim}[theorem]{Claim}
\newtheorem{corollary}[theorem]{Corollary}
\theoremstyle{definition}
\newtheorem{definition}{Definition}
\newtheorem*{definition*}{Definition}
\newtheorem{remark}{Remark}
\author{S\'{e}bastien Ott}
\address{Section de Mathématiques, Université de Genève, CH-1211 Genève, Switzerland}
\email{sebastien.ott@unige.ch}
\title[Analyticity of the Pressure in the Ising Model]{Weak Mixing and Analyticity of the Pressure in the Ising Model}
\begin{document}
	
\begin{abstract}
	We prove that the pressure (or free energy) of the finite range ferromagnetic Ising model on \(\Zd\) is analytic as a function of both the inverse temperature \(\beta\) and the magnetic field \(h\) whenever the model has the exponential weak mixing property. We also prove the exponential weak mixing property whenever \(h\neq 0\). Together with known results on the regime \(h=0,\beta<\betac\), this implies both analyticity and weak mixing in all the domain of \((\beta,h)\) outside of the transition line \([\betac,\infty)\times \{0\}\). The proof of analyticity uses a graphical representation of the Glauber dynamic due to Schonmann and cluster expansion. The proof of weak mixing uses the random cluster representation.
\end{abstract}

\maketitle

\section{Introduction}

\subsection{Ising Model and the Pressure}

We work with the Ising model on \(\Zd\) with nearest neighbour interactions. The results extend to finite range interactions, but, for the sake of presentation, we will restrict to nearest neighbours. We will work in the uniqueness regime, we thus consider the measure as obtained by limit of finite volume measures on the torus:
\begin{gather*}
	\mu_{N,\beta,h}(\sigma) = \frac{1}{Z_{N,\beta,h}} \exp{ \beta\sum_{i\sim j} \sigma_i\sigma_j + h \sum_{i} \sigma_i}\\
	Z_{N,\beta,h} = \sum_{\sigma\in \{-1,+1\}^{\torusd_N}} \exp{ \beta\sum_{i\sim j} \sigma_i\sigma_j + h \sum_{i} \sigma_i},
\end{gather*}where the sum over \(i\sim j\) is over nearest neighbour pairs in \(\torusd_N\) and the sum over \(i\) is over \(i\in\torusd_N\). \(\beta,h,N\) will often be dropped from the notation. The canonical thermodynamic quantity associated to this model is the \emph{pressure}
\begin{equation}
	\label{eq:pressure}
	\pressure(\beta,h) = \lim_{N\to\infty} \frac{1}{N^d} \log(Z_{N,\beta,h}),
\end{equation}whose non-analyticity points determine the phase transition points of the model. We refer to~\cite{Friedli+Velenik-2017} for proof of this last statement and of \(\pressure\)'s existence. It is directly related to the free energy \(f(\beta,h)=-\frac{1}{\beta}\pressure(\beta,h)\).

\subsection{The Problem of \(\pressure\)'s Analyticity}

A classical way to characterize phase transition is through analytic properties of the pressure of a model. A standard definition of a transition point is thus
\begin{definition}
	\label{def:transitionPoint}
	\((\beta,h)\) is a transition point if \(\pressure\) is not analytic at \((\beta,h)\).
\end{definition}
The phase transition in the Ising model is one of the most studied phenomena in classical equilibrium statistical physic and the phase diagram is known to be represented by Figures~\ref{fig:PhaseDia}.
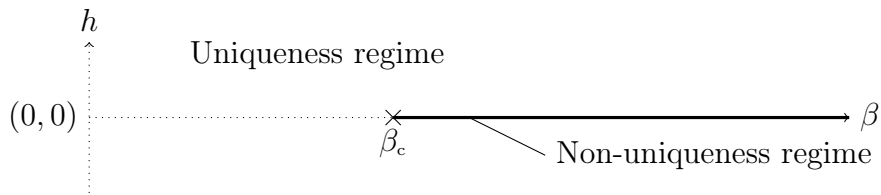
\begin{figure}[h]
	\begin{center}
		\begin{tikzpicture}
		\draw[->,dotted] (0,0) -- (10,0);
		\draw (10,0) node[right] {$\beta$};
		\draw (3.9,0.1)--(4.1,-0.1);
		\draw (3.9,-0.1)--(4.1,0.1);
		\draw (4,0) node[below] {$\betac$};
		\draw(3,0.5)node[above] {Uniqueness regime};
		\draw[very thick] (4,0)--(10,0);
		\draw (5,0)--(6,-0.5);
		\draw(6,-0.5)node[right] {Non-uniqueness regime};
		\draw (0,0) node[left] {$(0,0)$};
		\draw [->,dotted] (0,-1) -- (0,1);
		\draw (0,1) node[above] {$h$};
		\end{tikzpicture}
		\caption{Phase diagram of the Ising model in dimension \(d\geq 2\).}
		\label{fig:PhaseDia}
	\end{center}
\end{figure}

with the regime where at least two phases coexist being \((\betac(d),\infty)\times \{0\}\). The set of transition points should thus be \([\betac(d),\infty)\times \{0\}\). Many properties of the measures are known outside of the transition line: uniqueness of the infinite volume measure, exponential decay of covariances, CLT type result for the block magnetisation field... On the side of characterization of the transition points via Definition~\ref{def:transitionPoint}, one, of course, counts the perturbative results (that can be obtained using cluster expansion for the high and low temperature expansions, but the cited results are anterior to systematic use of this tool).
\begin{itemize}
	\item \(\beta\leq \beta_-<\betac\), \(\pressure\) analytic in \(\beta\) and \(h\) (see for example \cite{Gallavotti+Miracle-Sole+Robinson-1968,Messager+Trotin-1974,Messager+Trotin-1976}).
	\item \(h=0,\beta\geq  \beta_+>\betac\), \(\pressure\) analytic in \(\beta\) (see for example \cite{Messager+Trotin-1974,Messager+Trotin-1976}).
\end{itemize}
As well as a wealth of non-perturbative results
\begin{itemize}
	\item the Lee-Yang Theorem (\cite{Lee+Yang-1952,Yang+Lee-1952}) yields the analyticity of \(\pressure\) in \(h\) on \(\{\real(h)>0\}\).
	\item In dimension \(2\), in the case of nearest-neighbour interactions, Onsager explicitly computed \(\pressure\) in \cite{Onsager-1944}. In more general planar cases, computations of \(\pressure\), based on algebraic or combinatorial methods, are available. See for example~\cite{Kasteleyn-1963,McCoy+Wu-1973,Cimasoni+Duminil-Copin-2013} and references therein.
	\item Alternatively, still for \(d=2\), it is known that weak mixing implies a stronger form of mixing (called strong mixing), and that the later implies complete analyticity of the model (of which the analyticity of the pressure is a trivial consequence). See~\cite{Martinelli+Olivieri+Schonmann-1994,Schonmann+Shlosman-1995}.
	\item \cite{Lebowitz-1972} proves smoothness of \(\pressure\) in both \(h\) and \(\beta\) whenever covariances decay exponentially; together with \cite{Aizenman+Barsky+Fernandez-1987} and \cite{Lebowitz+Penrose-1968}, this yields smoothness of \(\pressure\) outside of the half line \([\betac,\infty)\times\{0\}\).
	\item \cite{Martin-Lof-1973} where smoothness of \(\beta\mapsto\pressure(\beta,0)\) is proved in the regime \(\beta\geq\beta_+>\betac\), the proof works under the assumption that the covariances decay exponentially with the distance in a pure state. Toghether with \cite{Duminil-Copin+Goswami+Raoufi-2018}, this implies smoothness in the regime \(\beta>\betac\). It is also shown, under the same hypotheses, that \(\pressure\) possesses directional derivatives at all orders in \(h\) at \(h=0\).
\end{itemize}
All together, this results give smoothness whenever \(\pressure\) is expected to be smooth (see Figure~\ref{fig:SmoothnessDia}) and analyticity in the regimes depicted in Figure~\ref{fig:AnalyticityDia}.
\begin{figure}[h]
	\begin{center}
		\begin{tikzpicture}
		\shade[bottom color=lightgray,top color=white] (0,0.5) -- (0,1) -- (10,1) -- (10,0.5) -- cycle;
		\fill[lightgray](0,-0.5) -- (0,0.5) -- (10,0.5) -- (10,-0.5) -- cycle;
		\shade[bottom color=white,top color=lightgray] (0,-1) -- (0,-0.5) -- (10,-0.5) -- (10,-1) -- cycle;
		\draw[->,dotted] (0,0) -- (10,0);
		\draw (10,0) node[right] {$\beta$};
		\draw (3.9,0.1)--(4.1,-0.1);
		\draw (3.9,-0.1)--(4.1,0.1);
		\draw (4,0) node[below] {$\betac$};
		\draw[very thick] (4,0)--(10,0);
		\draw (0,0) node[left] {$(0,0)$};
		\draw [->,dotted] (0,-1) -- (0,1);
		\draw (0,1) node[above] {$h$};
		\end{tikzpicture}
		\caption{Domain where smoothness of \(\pressure\) is proven. The black line is the region where \(\pressure\) is smooth only as a function of \(\beta\).}
		\label{fig:SmoothnessDia}
	\end{center}
\end{figure}
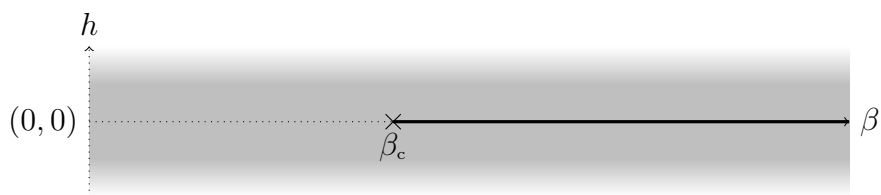

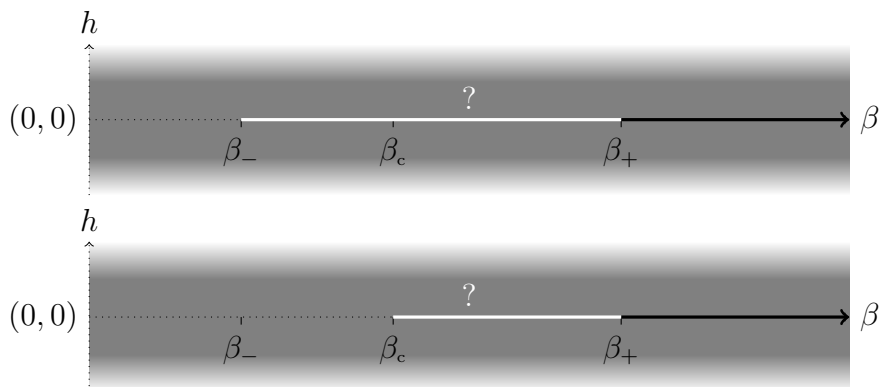
\begin{figure}[h]
	\begin{center}
		\begin{tikzpicture}
		\shade[bottom color=gray,top color=white] (0,0.5) -- (0,1) -- (10,1) -- (10,0.5) -- cycle;
		\fill[gray](0,-0.5) -- (0,0.5) -- (10,0.5) -- (10,-0.5) -- cycle;
		\shade[bottom color=white,top color=gray] (0,-1) -- (0,-0.5) -- (10,-0.5) -- (10,-1) -- cycle;
		\draw[->,dotted] (0,0) -- (10,0);
		\draw (10,0) node[right] {$\beta$};
		\draw (4,0)--(4,-0.1) node[below] {$\betac$};
		\draw[->,very thick] (7,0)--(10,0);
		\draw (2,0)--(2,-0.1) node[below] {$\beta_-$};
		\draw (7,0)--(7,-0.1) node[below] {$\beta_+$};
		\draw[very thick, white] (2,0)--(7,0);
		\draw[white] (5,0) node[above] {?};
		\draw (0,0) node[left] {$(0,0)$};
		\draw [->,dotted] (0,-1) -- (0,1);
		\draw (0,1) node[above] {$h$};
		\end{tikzpicture}
		\begin{tikzpicture}
		\shade[bottom color=gray,top color=white] (0,0.5) -- (0,1) -- (10,1) -- (10,0.5) -- cycle;
		\fill[gray](0,-0.5) -- (0,0.5) -- (10,0.5) -- (10,-0.5) -- cycle;
		\shade[bottom color=white,top color=gray] (0,-1) -- (0,-0.5) -- (10,-0.5) -- (10,-1) -- cycle;
		\draw[->,dotted] (0,0) -- (10,0);
		\draw (10,0) node[right] {$\beta$};
		\draw (4,0)--(4,-0.1) node[below] {$\betac$};
		\draw[->,very thick] (7,0)--(10,0);
		\draw (2,0)--(2,-0.1) node[below] {$\beta_-$};
		\draw (7,0)--(7,-0.1) node[below] {$\beta_+$};
		\draw[very thick, white] (4,0)--(7,0);
		\draw[white] (5,0) node[above] {?};
		\draw (0,0) node[left] {$(0,0)$};
		\draw [->,dotted] (0,-1) -- (0,1);
		\draw (0,1) node[above] {$h$};
		\end{tikzpicture}
		\caption{Domain where the analyticity of \(\pressure\) is known. The black line is the region where analyticity only holds for \(\beta\). The top picture is the state of the problem before this work. This article covers the grey part of the bottom picture.}
		\label{fig:AnalyticityDia}
	\end{center}
\end{figure}

The goal of this article is to close the High Temperature side of the problem by showing that \(\pressure\) is analytic in both \(h\) and \(\beta\) at any point \((\beta,0)\) with \(\beta<\betac\). The proof implies analyticity of \(\pressure\) in \(\beta\) and \(h\) around every point not in \([\betac,\infty)\times \{0\}\). This also close the problem of analyticity in \(h\) and leaves open the problem of proving that \(\beta\mapsto\pressure(\beta,0)\) is analytic in \(\beta\) on \((\betac,\infty)\).

\begin{remark}
	Analyticity of the pressure (as well as convergent (uniformly over volumes) cluster expansions for partition functions) are a consequence of the \emph{complete analyticity} conditions of~\cite{Dobrushin+Shlosman-1987}, or of some of its restricted version (see the discussion in~\cite{Martinelli+Olivieri-1994}). Equivalence with results on the mixing rate (uniformly over volumes and boundary conditions) of Glauber dynamic is investigated in~\cite{Stroock+Zegarlinski-1992,Yoshida-1997}. The main difference is that the result needed in the present work is exponential relaxation of the \emph{infinite volume} dynamic (which holds throughout the whole off-transition region via weak mixing) and not of the dynamic in any finite volume with any boundary conditions (which fails at low temperature and small positive field in dimension \(\geq 3\), see~\cite{Basuev-2007}).
\end{remark}

\subsection{Notations, Conventions and a Few Definitions}
\label{sec:MetricGraphProp}
We write \([n]=\{1,2,...,n\}\). For \(G=(V_G,E_G)\) a graph we denote \(i\sim j \iff \{i,j\}\in E_G\). For \(C\subset V_G\), we denote:
\begin{gather*}
	\partial^{\interior}C = \big\{i\in C:\ \exists j\in V_G\setminus C, \{i,j\}\in E_G\big\},\\
	\partial^{\exterior}C = \big\{i\in V_G\setminus C:\ \exists j\in C, \{i,j\}\in E_G\big\} ,\\
	\partial^{\edge}C=\big\{\{i,j\}\in E_G:\ i\in C, j\in V_G\setminus C \big\}.
\end{gather*}
For a set \(V\) we denote \(\calP(V)=\{A\subset V\}\) the set of subsets of \(V\) and \(\partition(V)\) the set of (unordered) partitions of \(V\). We see \(\Zd\) and \(\torusd_N\) as canonically embedded in \(\R^d\), \((-N,N]^d\) respectively. Denote
\begin{equation*}
	B_l=\{v \in \Zd:\ \norm{v}_{\infty} \leq l \}, \quad B_l(v) = B_l+ v.
\end{equation*}

Also define the sub-lattices \(\bbL_L = ((2L+1)\Z)^d\). \(\Zd\) is then naturally paved by the set of (disjoint) boxes \((B_L(v))_{v\in\bbL_L}\). We will also use \(\bbL_L\) for the subset of \(\torusd_N\) without mention when clear from the context (and will then make the implicit assumption that \(2N\) is divisible by \(2L+1\)). We will also use the following notation: for a set \(\Delta\subset \Zd\) let
\begin{equation*}
	[\Delta]_L = \{v\in \bbL_L:\ \Delta\cap B_L(v)\neq \varnothing \}.
\end{equation*}
We will also often see \([\Delta]_L\) as a subset of \(\Zd\) via \(\bigcup_{v\in [\Delta]_L} B_L(v)\). When doing so, we add ``seen as a subset of \(\Zd\)'' after \([\Delta]_L\). \([\Delta]_L\) is a ``coarse approximation'' of \(\Delta\). Notice that \([\Delta]_L\) (as subset of \(\Zd\)) is connected if \(\Delta\) is.

We say that a measure \(\mu\) on \(S^{V_G}\) is \emph{weak mixing} if there exists \(C\geq 0,c>0\) such that for any \(\Delta,\Delta'\subset V_G\) and any events \(A,B\) supported on \(\Delta, \Delta'\) respectively with \(\mu(B)>0\), one has
\begin{equation*}
	|\mu(A\given B)- \mu(A)| \leq C\sum_{i\in \Delta,j\in\Delta'} e^{-cd_G(i,j)},
\end{equation*}where \(d_G\) is the graph distance in \(G\). We denote this property \(WM(\mu)\).

We say that a measure \(\mu\) on \(S^{V_G}\) is \emph{ratio weak mixing} if there exists \(C\geq 0,c>0\) such that for any \(\Delta,\Delta'\subset V_G\) and any event \(A,B\) having strictly positive probability, supported on \(\Delta, \Delta'\) respectively, one has
\begin{equation*}
\Big|\frac{\mu(A\cap B)}{\mu(A)\mu(B)}- 1\Big| \leq C\sum_{i\in \Delta,j\in\Delta'} e^{-cd_G(i,j)},
\end{equation*}where \(d_G\) is the graph distance in \(G\).

\subsection{Results}

\begin{theorem}
	\label{thm:Ising_weakMixing}
	If \(h>0\), then \(\mu_{\beta,h}\) is weak mixing.
\end{theorem}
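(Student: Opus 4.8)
The plan is to use the Edwards--Sokal/random-cluster representation of the model with a ghost vertex encoding the field, and to reduce weak mixing to an exponential-decay estimate for connections that avoid the ghost. It suffices to prove the bound for the torus measures $\mu_{N,\beta,h}$ uniformly in $N$, and then let $N\to\infty$. For events $A$ on $\Delta$ and $B$ on $\Delta'$, $|\mu_{N}(A\given B)-\mu_{N}(A)|\le\max_{s}|\mu_{N}(A\given \sigma_{\Delta'}=s)-\mu_N(A)|\le\max_{s,s'}|\mu_{N}(A\given\sigma_{\Delta'}=s)-\mu_{N}(A\given\sigma_{\Delta'}=s')|$, so it is enough to compare the conditional law of $\sigma_\Delta$ under two arbitrary spin boundary conditions on the \emph{far, fixed} set $\Delta'$; in particular one never conditions on a surface surrounding $\Delta$, nor on finite-volume measures with arbitrary boundary conditions — this is what lets the argument survive in the regime where strong mixing is known to fail.

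\textbf{Random-cluster representation.} Adjoin a ghost vertex $g$ joined to every vertex of $\torusd_N$ and realise the field as an interaction $h\sigma_i\sigma_g$ with $\sigma_g\equiv+1$. Let $\phi=\phi_{N,\beta,h}$ be the associated random-cluster measure on edge configurations of this augmented graph: $q=2$, edge-parameter $1-e^{-2\beta}$ on lattice edges and $1-e^{-2h}$ on ghost edges; in the Edwards--Sokal coupling the cluster $\mathcal{C}_g$ of $g$ carries spin $+1$ and every other cluster an independent uniform sign. Thus, given the edges, $\sigma_\Delta$ is a function of the partition of $\Delta$ induced by the clusters \emph{not} meeting $g$ (together with the independent signs) and of the set $\Delta\cap\mathcal{C}_g$ of sites frozen to $+1$; consequently $\sigma_\Delta$ and $\sigma_{\Delta'}$ can be coupled only through a cluster meeting both sets — which, unless it is $\mathcal{C}_g$, is a long non-ghost cluster — and through $\mathcal{C}_g$ itself, whose spin is deterministic.

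\textbf{The key estimate.} For $h>0$ there are $c,C>0$, depending only on $h$, such that
\begin{equation*}
	\phi\bigl(i\leftrightarrow j,\ i\nleftrightarrow g\bigr)\ \le\ C\,e^{-c\,d(i,j)}\qquad\text{uniformly in }N .
\end{equation*}
On this event the lattice cluster of $i$ contains $j$, hence has at least $d(i,j)+1$ vertices and all ghost edges leaving it are closed; exploring this cluster and applying the finite-energy bound for the ghost edges — each is closed with conditional probability at most $q_h:=2e^{-2h}/(1+e^{-2h})<1$ — yields $\phi(i\leftrightarrow j,\ i\nleftrightarrow g)\le q_h^{\,d(i,j)+1}$. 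This is the only place where $h\neq 0$ is used, and the rate is independent of $\beta$. Equivalently (and usefully), the structure of the random-cluster weight implies that, conditionally on the restriction of the configuration to the lattice edges, the events $\{\mathcal{C}\leftrightarrow g\}$ over the lattice clusters $\mathcal{C}$ are independent, with a probability $\pi(|\mathcal{C}|)$ depending only on $|\mathcal{C}|$ and satisfying $1-\pi(n)\le 2e^{-2hn}$: a large lattice cluster is, with overwhelming probability, part of $\mathcal{C}_g$.

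\textbf{Conclusion, and the main obstacle.} It remains to couple the random-cluster configurations under the two boundary conditions on $\Delta'$. Conditioning on $\sigma_{\Delta'}$ reweights $\phi$ only through the clusters meeting $\Delta'$: each such cluster is either a small non-ghost cluster (confined to a bounded neighbourhood of $\Delta'$ outside an event decaying exponentially in the radius, by the key estimate) or part of $\mathcal{C}_g$, in which case the reweighting merely pushes $\mathcal{C}_g$ off a bounded neighbourhood of $\Delta'$ — a local perturbation, since detaching a bounded region from $\mathcal{C}_g$ costs far less than disconnecting it globally from $g$. One then explores the small clusters around $\Delta$ and around $\Delta'$, resamples the configuration in between via the domain-Markov property, and couples the two copies so that they agree on $\Delta$ off an event of probability $\le C\sum_{i\in\Delta,\,j\in\Delta'}e^{-c\,d(i,j)}$; on that event the spins on $\Delta$ coincide, which is the required bound. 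The main difficulty is precisely this last step: the residual dependence carried by the macroscopic cluster $\mathcal{C}_g$ — equivalently, exponential decay of the truncated two-point function and its multipoint analogues for $h>0$ and \emph{all} $\beta$ — does not follow from the key estimate alone, and establishing it from finite energy, the domain-Markov property, and the conditional independence above, without ever appealing to uniformity over finite volumes, is the technical heart of the proof.
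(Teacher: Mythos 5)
Your reduction to the ghost-vertex random-cluster representation and your ``key estimate'' are correct: conditionally on the lattice edges the attachment events of the clusters to the ghost are indeed independent with $1-\pi(n)\le 2e^{-2hn}$, so $\phi(i\leftrightarrow j,\ i\nleftrightarrow g)\le 2e^{-2h(d(i,j)+1)}$, uniformly in $\beta$ and $N$. But the proof stops exactly where it would have to start, and you say so yourself in the last paragraph. The assertion that conditioning on $\sigma_{\Delta'}$ ``merely pushes $\mathcal{C}_g$ off a bounded neighbourhood of $\Delta'$ --- a local perturbation'' is precisely what has to be proved, and it is not a consequence of the key estimate: that estimate controls non-ghost connections under the \emph{unconditioned} measure, whereas conditioning on, say, $\sigma_{\Delta'}\equiv -1$ forces all of $\Delta'$ out of $\mathcal{C}_g$, an event which at large $\beta$ and small $h>0$ is exponentially costly and creates a minus droplet whose spatial extent is exactly the quantity in question. (This is the same phenomenon that makes strong mixing fail in $d\ge 3$ at low temperature and small field, as recalled in the paper's remark, so any correct argument must exploit the specific geometry here; ``finite energy plus domain Markov'' alone will not do it.) Also, the ingredient you describe as the remaining technical heart --- exponential decay of truncated correlations for $h>0$ and all $\beta$ --- is classical (Lebowitz--Penrose, via Lee--Yang) and is used as input by the paper; even granting it, your coupling step is still unproven, since weak mixing requires uniform control of conditioning by arbitrary events, not just two-point decay.

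For comparison, the paper takes a different and more economical route. By FKG/monotonicity it reduces weak mixing to a single scalar estimate, the exponential relaxation of the magnetization $|\lrangle{\sigma_0}_N^{+}-\lrangle{\sigma_0}_N^{-}|\le Ce^{-\nu N}$ (Theorem~\ref{thm:epxRelax_Magnet}, with the reduction quoted from Martinelli's notes); this removes the need to couple under arbitrary conditionings, which your scheme never manages to do. The plus-vs-free half is then soft (differentiate in the boundary coupling and invoke Lebowitz--Penrose decay of covariances for $h>0$). The free-vs-minus half --- the true analogue of your missing step, since there the field fights the boundary condition --- is Lemma~\ref{lem:epxRelax_MagnetMinusToFree}: it uses the FK representation with cluster weights $e^{h|C|}+e^{-h|C|}$ (the same structure as your conditional-independence observation), conditions on the boundary cluster being minus, and controls its reach by a multiscale argument (the events $A_i$, the outermost cut-sets $\Gamma_i$, the maps $T_i$ closing them, finite energy and entropy counting) showing the minus boundary cluster reaches $\Lambda_{N/K}$ only with probability $e^{-\nu N}$. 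Some such surgery/energy-entropy argument is what your proposal is missing; without it the final coupling has no justification, so the proposal as written has a genuine gap.
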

\begin{proof}
	The weak mixing property is known to be a consequence of the exponential relaxation of the magnetization (see \cite[Section 4.1]{Martinelli-1999}). Thus, Theorem~\ref{thm:epxRelax_Magnet} implies Theorem~\ref{thm:Ising_weakMixing}.
\end{proof}

\begin{corollary}
	If \(h>0\), then \(\mu_{\beta,h}\) is ratio weak mixing.
\end{corollary}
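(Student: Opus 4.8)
The plan is to obtain ratio weak mixing from weak mixing (Theorem~\ref{thm:Ising_weakMixing}) by the general principle that, for lattice spin systems enjoying a uniform \emph{finite energy} bound, weak mixing and ratio weak mixing are equivalent; this is essentially due to Alexander \cite{Alexander-1998}. The ferromagnetic Ising model has finite energy for every \((\beta,h)\): from
\[
	\mu\bigl(\sigma_i=s\bigm|\sigma_j=\eta_j,\ j\neq i\bigr)=\Bigl(1+\exp\bigl(-2s(h+\beta\sum_{j\sim i}\eta_j)\bigr)\Bigr)^{-1},\qquad s\in\{-1,+1\},
\]
one reads off, uniformly in \(i\), in the configuration \(\eta\) and in the volume,
\[
	\mu\bigl(\sigma_i=s\bigm|\sigma_j=\eta_j,\ j\neq i\bigr)\ \geq\ \delta:=\bigl(1+e^{2(h+2d\beta)}\bigr)^{-1}\ >\ 0.
\]
Note that \(h>0\) plays no role here; it enters the corollary only through Theorem~\ref{thm:Ising_weakMixing}.

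First I would reduce to cylinder events: it suffices to prove the estimate for \(A\) a cylinder event on a finite set \(\Delta\) and \(B\) a cylinder event on a finite set \(\Delta'\), the general case following by approximating \(A,B\) by cylinder events and passing to the limit. Writing \(\mu(A\cap B)/(\mu(A)\mu(B))=\mu(A\given B)/\mu(A)\), weak mixing bounds the \emph{difference} \(|\mu(A\given B)-\mu(A)|\le C\sum_{i\in\Delta,j\in\Delta'}e^{-cd_G(i,j)}\). The naive estimate \(\bigl|\frac{\mu(A\cap B)}{\mu(A)\mu(B)}-1\bigr|\le \mu(A)^{-1}|\mu(A\given B)-\mu(A)|\) is useless in general, because finite energy only guarantees \(\mu(A)\geq \delta^{|\Delta|}\), far below the weak mixing right-hand side; the object to control is therefore the \emph{ratio} \(\mu(A\given B)/\mu(A)\) directly.

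This is where finite energy is used. Following \cite{Alexander-1998}, one conditions on the configuration of a surface \(\mathcal S\subset\Zd\) separating \(\Delta\) from \(\Delta'\); by the Markov property of \(\mu\) the events \(A\) and \(B\) are conditionally independent given \(\sigma|_{\mathcal S}\), so that \(\mu(A\cap B)/(\mu(A)\mu(B))-1\) equals a normalized covariance of two functions of \(\sigma|_{\mathcal S}\), which one estimates by comparing conditional probabilities across \(\mathcal S\). The finite energy bound \(\delta\) supplies uniform lower bounds for the probabilities of the \emph{local} pieces of \(A\) and \(B\) that one must decouple — so the harmful factor \(\mu(A)^{-1}\) gets replaced by harmless local constants — while the exponential decay coming from weak mixing keeps the accumulated error summable, yielding \(\bigl|\frac{\mu(A\cap B)}{\mu(A)\mu(B)}-1\bigr|\le C'\sum_{i\in\Delta,j\in\Delta'}e^{-c'd_G(i,j)}\) for suitable \(C'\ge 0\), \(c'>0\) (possibly with \(c'<c\)). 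The hard part is exactly this bookkeeping: organizing the decomposition so that the sizes of \(\Delta\) and \(\Delta'\) never amplify the error, which the uniform finite energy of the Ising model makes possible. This is the technical core of \cite{Alexander-1998}, which I would cite rather than reprove.
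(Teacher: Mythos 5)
Your proposal is correct and follows essentially the same route as the paper: the paper's proof is a one-line appeal to \cite[Theorem 3.3]{Alexander-1998}, noting that Theorem~\ref{thm:Ising_weakMixing} together with the spatial Markov property puts the model within the scope of Alexander's result. Your additional verification of the uniform finite energy bound and the sketch of Alexander's conditioning argument are consistent with this, just more detailed than what the paper records.
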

\begin{proof}
	\ref{thm:Ising_weakMixing} and the spatial Markov property imply that the hypotheses of~\cite[Theorem 3.3]{Alexander-1998} are fulfilled. Thus, \(\mu_{\beta,h}\) is ratio weak mixing.
\end{proof}

\begin{theorem}
	\label{thm:PressureAnalytic}
	For any \(d\geq 1\) and points \((\beta,h)\) such that \(\mu_{\beta,h}\) is weak mixing, \(\pressure\) is analytic in a neighbourhood of \((\beta,h)\).
\end{theorem}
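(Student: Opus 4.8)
The plan is to reduce the statement to the joint analyticity of two local observables, and to obtain the latter through a space-time cluster expansion built on Schonmann's graphical representation of the Glauber dynamics. Recall first that on the off-transition region $\pressure$ is $C^{1}$, and that $\partial_{h}\pressure$ and $\partial_{\beta}\pressure$ equal, up to positive dimension-dependent constants $c_{1},c_{2}$, the magnetization density $m(\beta,h):=\langle\sigma_{0}\rangle_{\beta,h}$ and the energy density $e(\beta,h):=\langle\sigma_{0}\sigma_{\eone}\rangle_{\beta,h}$ respectively (differentiate $\tfrac{1}{N^{d}}\log Z_{N,\beta,h}$ on $\torusd_{N}$, where all sites, and all edges, are equivalent, and let $N\to\infty$). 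It therefore suffices to show that $m$ and $e$ extend to functions analytic in $(\beta,h)$ on a complex polydisc $D$ around the given point $(\beta_{0},h_{0})$: indeed, putting
\[
	\widetilde\pressure(\beta,h)=\pressure(\beta_{0},h_{0})+\int_{0}^{1}\Bigl(c_{2}\,e\bigl(\gamma(t)\bigr)\,(\beta-\beta_{0})+c_{1}\,m\bigl(\gamma(t)\bigr)\,(h-h_{0})\Bigr)\,dt,\qquad \gamma(t)=(1-t)(\beta_{0},h_{0})+t(\beta,h),
\]
the fundamental theorem of calculus gives $\widetilde\pressure=\pressure$ on a real neighbourhood of $(\beta_{0},h_{0})$, while the right-hand side is manifestly analytic in the complex endpoint $(\beta,h)$; hence $\pressure$ is analytic near $(\beta_{0},h_{0})$. (Only local analyticity is needed, so no path-independence issue arises.)

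To treat $m$ — the treatment of $e$ being identical — I would use Schonmann's graphical construction of the heat-bath Glauber dynamics reversible for $\mu_{\beta,h}$ and stationary in time (it exists because the dynamics is attractive): independent rate-one Poisson clocks at the sites of $\Zd$, each ring carrying an i.i.d.\ uniform mark in $[0,1]$, and a ring at $(x,s)$ resetting $\sigma_{x}$ to $+1$ or $-1$ according to whether the mark is $\le p_{x}\bigl(\sigma(s^{-});\beta,h\bigr)$ or not, where $p_{x}(\eta;\beta,h)=\tfrac12\bigl(1+\tanh\bigl(\beta\sum_{y\sim x}\eta_{y}+h\bigr)\bigr)$ is the conditional $\mu_{\beta,h}$-probability of $\sigma_{x}=+1$ given the neighbours. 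This realizes $\mu_{\beta,h}$ as the law of the time-$0$ configuration, so that $m(\beta,h)=\e[\sigma_{0}(0)]$. Tracing backwards from $(0,0)$ — to the last ring at $0$, then to the last rings, before that time, of the neighbours on which the outcome actually depends, and so on — exhibits $\sigma_{0}(0)$ as a measurable function of the Poisson data inside a random space-time cluster $\mathcal{C}\ni(0,0)$. The hypothesis enters precisely here: weak mixing of $\mu_{\beta,h}$ forces exponential relaxation of the infinite-volume dynamics (as recalled in the Introduction), whence $\mathcal{C}$ is a.s.\ finite with exponential tails on its size and space-time extent.

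Partitioning the probability space according to the value of $\mathcal{C}$ gives $m(\beta,h)=\sum_{\mathcal{C}\ni(0,0)}w(\mathcal{C})$ with $w(\mathcal{C})=\e\bigl[\sigma_{0}(0)\,\IF{\textrm{cluster}=\mathcal{C}}\bigr]$, and the point is that for fixed $\mathcal{C}$ this $w(\mathcal{C})$ is a \emph{finite} expression — a parameter-free product of Poisson factors times, for each update event of $\mathcal{C}$, one factor $p_{x}$ or $1-p_{x}$ evaluated at the pertinent neighbour configuration, the whole summed over the finitely many compatible mark patterns — hence it is analytic in $(\beta,h)$ on any polydisc $D$ around $(\beta_{0},h_{0})$ small enough that $\beta\sum_{y\sim x}\eta_{y}+h$ stays off the poles $\tfrac{i\pi}{2}+i\pi\Z$ of $\tanh$ for every admissible $\eta$ (for which it suffices that $|\Im\beta|$ and $|\Im h|$ be small compared with $1/d$). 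On the real axis $|w(\mathcal{C})|\le\p(\textrm{cluster}=\mathcal{C})$, so the series converges and, by the exponential tails of $\mathcal{C}$, has uniformly small tails; I would then invoke a Koteck\'y--Preiss (or Dobrushin) type argument to upgrade this to convergence of $\sum_{\mathcal{C}\ni(0,0)}|w(\mathcal{C})|$ that is uniform over $D$, so the sum is analytic on $D$ and equals $m$ on the reals. Running the same argument for $e$ and invoking the reduction above proves the theorem.

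The step I expect to be the genuine obstacle is this last one. Exponential relaxation of the dynamics is a statement about \emph{real} parameters, whereas the expansion needs the \emph{weights} $w(\mathcal{C})$ — not the bare probabilities $\p(\textrm{cluster}=\mathcal{C})$ — to be analytically continued to $D$ with a Koteck\'y--Preiss smallness condition preserved uniformly in the complex parameters and in the truncation scale. Making this precise forces a careful choice of the polymers — which space-time clusters, with which pruning conventions — so that the objects being continued really are polymer weights, and so that the combinatorial entropy of the space-time clusters through a fixed point is dominated, with room to spare for the complex perturbation, by the exponential decay furnished by relaxation.
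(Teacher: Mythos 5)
Your reduction and your expansion share the paper's starting point (Schonmann's graphical construction, backward tracing of the information cluster, exponential tails furnished by weak mixing via the Martinelli--Olivieri input, Theorem~\ref{thm:ExpDecSUP}), but the step you yourself flag as the obstacle is a genuine gap, and it is exactly the point where the paper's argument takes a different turn. Your scheme requires the complexified weights \(w(\mathcal{C})=\e\big[\sigma_0(0)\IF{\mathrm{cluster}=\mathcal{C}}\big]\) to be analytically continued in \((\beta,h)\) and to remain summable over clusters uniformly on a polydisc. Off the real axis the only bound you have, \(|w(\mathcal{C})|\le\p(\mathrm{cluster}=\mathcal{C})\), is lost; worse, the event \(\{\mathrm{cluster}=\mathcal{C}\}\) is itself defined by comparisons of the uniform marks with the thresholds \(p_x(\eta;\beta,h)\), so the very decomposition of the expectation changes with the parameters, and after complexification each term is a signed/complex quantity whose modulus can exceed the real probability by factors exponential in the number of update events in \(\mathcal{C}\). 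Since the entropy of space-time clusters through \((0,0)\) is also exponential in their size (and the sum over clusters with continuum update times must first be coarse-grained to even make that count meaningful), no smallness survives: there is no small parameter multiplying the cluster weights, and the Koteck\'y--Preiss condition cannot be verified. You correctly identify this, but you do not supply the missing mechanism, so the proposal is not a proof.

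The paper circumvents precisely this by never continuing the dynamics (or the measure) in \((\beta,h)\). It writes \(\pressure(\beta+z,h)-\pressure(\beta,h)=\lim_N\frac{1}{|\torusd_N|}\log\lrangle{e^{z\sum_{i\sim j}\sigma_i\sigma_j}}_{\beta,h}\), keeps the dependency-encoding measure (the killed-update sets \(\killedUpdate\) of the graphical representation, at the \emph{real} base point) fixed, and lets the complex variable \(z\) enter only through the entire factors \(e^{zf_b}-1\), which are uniformly small for \(|z|\le\epsilon\). The polymer weights are then bounded by real probability estimates --- the coarse-grained exponential tail of \(\killedUpdate\) (Lemma~\ref{lem:ExpDecKilledUpdate}, obtained by domination by a strongly subcritical Bernoulli percolation on space-time blocks of scale \(L\)) --- times these small factors, which gives the hypothesis of Theorem~\ref{thm:ClusterExp_Convergence} with room to spare, uniformly in \(N\); Vitali's theorem then transfers analyticity to the limit. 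If you want to rescue your route, you would have to restructure it in the same spirit: fix all probabilistic structure at the real point and encode the parameter change as an explicit local perturbation inside the expectation (this is also how the paper proves Theorem~\ref{thm:CorrFunctAnalytic} for \(\lrangle{\sigma_A}\), which would replace your direct continuation of \(m\) and \(e\)). As a secondary point, your reduction through the fundamental theorem of calculus needs \(\pressure\) to be \(C^1\), with derivatives given by the infinite-volume \(m\) and \(e\) of a unique state, on a whole real neighbourhood of \((\beta_0,h_0)\); that does not follow from weak mixing at the single point without importing phase-diagram facts, whereas expanding the partition-function ratio directly avoids the issue altogether.
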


\begin{corollary}
	\label{cor:PressureAnalytic}
	For any \(d\geq 1\) and points \((\beta,h)\) with \(h\neq 0\) or \(h=0\) and \(\beta<\betac(d)\), \(\pressure\) is analytic in a neighbourhood of \((\beta,h)\).
\end{corollary}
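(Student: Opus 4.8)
The idea is simply to combine Theorem~\ref{thm:PressureAnalytic} with the available weak mixing results, splitting the off-transition region $\{(\beta,h) : h\neq 0\} \cup \{(\beta,0):\beta<\betac(d)\}$ into the cases covered by each. The plan is to fix a point $(\beta,h)$ in this set and, in each case, exhibit weak mixing of $\mu_{\beta,h}$, after which Theorem~\ref{thm:PressureAnalytic} delivers analyticity of $\pressure$ in a neighbourhood. Since Theorem~\ref{thm:PressureAnalytic} gives analyticity in a full neighbourhood of every weak mixing point, one does not even need weak mixing on the whole region, only on a dense-enough subset; but in fact it holds everywhere relevant.

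First I would treat $h>0$. Here Theorem~\ref{thm:Ising_weakMixing} directly gives that $\mu_{\beta,h}$ is weak mixing, so Theorem~\ref{thm:PressureAnalytic} applies. Next, the case $h<0$ follows from the spin-flip symmetry $\sigma\mapsto-\sigma$, which maps $\mu_{\beta,h}$ to $\mu_{\beta,-h}$ and leaves $\pressure$ invariant under $h\mapsto -h$ (equivalently, one simply reruns Theorem~\ref{thm:Ising_weakMixing} with the roles of $+1$ and $-1$ exchanged): thus $\mu_{\beta,h}$ is weak mixing for every $h<0$ as well. Finally, for $h=0$ and $\beta<\betac(d)$, the measure $\mu_{\beta,0}$ is weak mixing by the classical high-temperature results recalled in the introduction — concretely, exponential decay of truncated correlations for $\beta<\betac$ (Aizenman--Barsky--Fernández, Duminil-Copin--Tassion) upgrades, via the known equivalences in finite volume at high temperature, to the weak mixing property; this is exactly the regime $\beta\leq\beta_-<\betac$ folklore made sharp. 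In all three cases $\mu_{\beta,h}$ is weak mixing, so Theorem~\ref{thm:PressureAnalytic} yields a neighbourhood of $(\beta,h)$ on which $\pressure$ is analytic, proving the corollary.

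The only genuine subtlety — and it is minor — is the case $h=0$, $\beta<\betac(d)$: one must be slightly careful that "high temperature" here means all the way up to $\betac$, which requires invoking the sharpness of the phase transition rather than a perturbative small-$\beta$ argument. This is not proved in the present excerpt, so strictly it is an input; I would cite the exponential-decay results of the introduction together with a standard argument (or reference) showing that exponential decay of correlations for the infinite-volume measure at $h=0$, $\beta<\betac$ implies weak mixing of $\mu_{\beta,0}$. Everything else is bookkeeping: the $h\neq 0$ part is immediate from Theorem~\ref{thm:Ising_weakMixing}, and Theorem~\ref{thm:PressureAnalytic} is invoked verbatim.
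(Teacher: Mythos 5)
Your proposal is correct and follows essentially the same route as the paper: weak mixing from Theorem~\ref{thm:Ising_weakMixing} for \(h\neq 0\) (the paper, like you, implicitly uses spin-flip symmetry to pass from \(h>0\) to \(h<0\)) and from Aizenman--Barsky--Fern\'andez / Duminil-Copin--Tassion for \(h=0\), \(\beta<\betac\), then Theorem~\ref{thm:PressureAnalytic}. Your extra care about upgrading exponential decay of correlations to weak mixing in the high-temperature case is a reasonable clarification but does not change the argument.
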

\begin{proof}
	By Theorem~\ref{thm:Ising_weakMixing} \(\mu_{\beta,h}\) is weak mixing whenever \(h\neq 0\). By~\cite{Aizenman+Barsky+Fernandez-1987}, \(\mu_{\beta,0}\) is weak mixing whenever \(\beta<\betac\) (see also~\cite{Duminil-Copin+Tassion-2016} for a more recent proof). Apply then Theorem~\ref{thm:PressureAnalytic}.
\end{proof}

\begin{theorem}
	\label{thm:CorrFunctAnalytic}
	For any \(d\geq 1\) and points \((\beta,h)\) such that \(\mu_{\beta,h}\) is weak mixing, for any \(A\subset \Zd\) finite, \((z, w)\mapsto\lrangle{\sigma_A}_{\beta+z,h+w}\) is analytic in a neighbourhood of \((0,0)\).
\end{theorem}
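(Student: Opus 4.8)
The plan is to realise $\lrangle{\sigma_A}_{\beta,h}$ as the sum of an absolutely convergent polymer series whose individual terms are entire in the shift variables $(z,w)$ and whose convergence is uniform on a complex polydisc centred at the origin; analyticity of $(z,w)\mapsto\lrangle{\sigma_A}_{\beta+z,h+w}$ at $(0,0)$ then follows from the theorem of Weierstrass on uniform limits of analytic functions. Going through such a representation is necessary: weak mixing lets one approximate $\lrangle{\sigma_A}_{\beta,h}$ by finite-volume correlation functions, but those are ratios of partition functions that may vanish for complex parameters, so they need not be analytic on a fixed neighbourhood. The graphical representation of the Glauber dynamics together with the cluster expansion is exactly what provides a representation valid on an honest complex neighbourhood, as in the proof of Theorem~\ref{thm:PressureAnalytic}.

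First I would reuse the dynamical set-up of that proof. Running the infinite-volume Glauber dynamics from a fixed initial configuration in Schonmann's graphical construction, the spins on $A$ at time $t$ are a deterministic function of the update marks in the (random, almost surely finite) space-time support $\support$ of the backward history of $A\times\{t\}$, so $\e[\sigma_A(t)]$ is an expectation over this finite data. Weak mixing, through the exponential relaxation of the dynamics (Theorem~\ref{thm:epxRelax_Magnet}), both identifies $\lrangle{\sigma_A}_{\beta,h}=\lim_{t\to\infty}\e[\sigma_A(t)]$ and quantifies the error incurred when the true update rule is replaced by the killed one ($\killedUpdate$) outside a coarse-grained region: that error is summable over the region, with rate comparable to the weak-mixing rate.

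Next I would coarse-grain at a scale $L$ and split the support into connected space-time components as in the pressure argument; after letting $t\to\infty$ the time-dependence drops out and one is left with a representation
\[
\lrangle{\sigma_A}_{\beta,h}=\sum_{\Gamma\ni A}\weight(\Gamma;\beta,h),
\]
the sum running over finite families $\Gamma$ of pairwise compatible polymers one of which is anchored at $A$. By construction each weight $\weight(\Gamma;\beta,h)$ is a finite product and ratio of elementary update weights, which are polynomials in $e^{\pm\beta}$ and $e^{\pm h}$; hence $\weight(\Gamma;\,\cdot\,)$ extends to an entire function of $(z,w)$, and a polymer meeting $n$ coarse boxes has weight bounded by $(Ce^{-cL})^{n}$, uniformly for $(z,w)$ in a small polydisc $D_\varepsilon=\{|z|<\varepsilon,\ |w|<\varepsilon\}$, with $c$ comparable to the weak-mixing rate. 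Choosing $L$ large enough that the Koteck\'y--Preiss criterion holds at $(\beta,h)$, it persists on $D_\varepsilon$ after shrinking $\varepsilon$, so the polymer series converges absolutely and uniformly on $D_\varepsilon$; since it coincides with $\lrangle{\sigma_A}_{\beta+z,h+w}$ for real $(z,w)$, the latter extends analytically to a neighbourhood of $(0,0)$.

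The main obstacle, and essentially the only place where the argument departs from that of Theorem~\ref{thm:PressureAnalytic}, is the polymer anchored at $A$: one must verify that pinning the space-time history at $A\times\{t\}$ does not destroy the exponential suppression of large polymers touching the fixed finite set $A\subset\Zd$ — which is again delivered by weak mixing, in the form that the influence of the far-away environment on the dynamics inside a fixed region decays — and, in parallel, that complexifying the elementary update weights perturbs the Koteck\'y--Preiss bound only by $O(\varepsilon)$; the latter is routine once the real-parameter estimates of the pressure argument are in place.
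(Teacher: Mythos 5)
There is a genuine gap, and it sits exactly at the point your sketch treats as routine: \emph{how the shift \((z,w)\) enters the polymer representation}. You propose that the polymer weights are built from ``elementary update weights'' of the Glauber dynamics, polynomials in \(e^{\pm\beta},e^{\pm h}\), and that one may complexify these and re-verify Koteck\'y--Preiss on a polydisc. But the whole graphical apparatus — Schonmann's construction, the support and killed-update functions, Theorem~\ref{thm:ExpDecSUP}, and the coarse-graining/percolation-domination estimates behind Lemma~\ref{lem:ExpDecKilledUpdate} — consists of probabilistic statements (couplings, FKG/monotonicity, stochastic domination) about a dynamics run at the \emph{fixed real} point \((\beta,h)\); none of it survives giving the flip rates complex parameters, so ``perturbing the elementary update weights by \(O(\varepsilon)\)'' is not a step one can carry out. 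The paper's proof keeps the dependency-encoding measure \(\Phi^L\) at the real point and puts the entire \((z,w)\)-dependence into the observable: on the torus one writes \(\lrangle{\sigma_A}_{N,\beta,h+z}\) as the ratio of \(\lrangle{\prod_{v}e^{zM_v}f_v^A}_{N,\beta,h}\) to \(\lrangle{\prod_{v}e^{zM_v}}_{N,\beta,h}\), expands \emph{both} via \eqref{eq:PolymerRep_General} with weights \(\weight^A\) and \(\weight\) that coincide for polymers not meeting \(A\), and obtains \(\lrangle{\sigma_A}_{N,\beta,h+z}=\exp\big(\sum \Ursell(C_1,\dots,C_n)(\prod\weight^A(C_i)-\prod\weight(C_i))\big)\) on \(\{|z|\le\epsilon\}\) uniformly in \(N\); Vitali then gives the claim in infinite volume. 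Note that your stated reason for avoiding ratios — that denominators might vanish for complex parameters — is precisely what the cluster expansion of the denominator rules out: it is the exponential of a convergent series on the polydisc, hence non-zero. Your anchored series \(\sum_{\Gamma\ni A}\weight(\Gamma;\beta,h)\) is asserted rather than derived; the cancellation of polymers away from \(A\) that would produce it comes exactly from the ratio structure you set aside, and without specifying where \((z,w)\) lives in \(\weight(\Gamma;\cdot)\) the claimed entirety and the \((Ce^{-cL})^{n}\) bound have no basis.

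Two smaller points. The exponential relaxation input you need is Theorem~\ref{thm:ExpDecSUP} (decay of the support function under weak mixing), not Theorem~\ref{thm:epxRelax_Magnet}, which is the static \(h>0\) magnetization estimate used to \emph{prove} weak mixing. And working directly with the infinite-volume dynamics forces you to justify \(\lrangle{\sigma_A}=\lim_{t\to\infty}\e[\sigma_A(t)]\) and the interchange of limits behind ``the time-dependence drops out''; the paper avoids this by doing everything on \(\torusd_N\), where the chain is ergodic and the polymer partition functions are finite products, and only then passing to \(N\to\infty\) by uniform bounds plus convergence for real \(z\).
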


\begin{remark}
	We state here the results for nearest-neighbours interactions but the proof works the same for finite range interactions at the cost of heavier notations. The proof also implies that the pressure is analytic in any small enough perturbation of the Hamiltonian by a finite range potential.
\end{remark}

\begin{remark}
	As well as Theorems~\ref{thm:PressureAnalytic} and~\ref{thm:CorrFunctAnalytic}, the proof provides a way to construct a convergent cluster expansions for perturbations of the partition functions on the torus. Moreover, working with the Ising model is only required to have Theorem~\ref{thm:ExpDecSUP}, whose proof uses the lattice FKG property. But this result should hold for most lattice spin models with the weak mixing property.
\end{remark}

\subsection{Organization of the Paper}

The proof of Theorem~\ref{thm:Ising_weakMixing} (i.e.: the proof of exponential relaxation of the magnetisation) is independent of the rest and is contained in Section~\ref{sec:ExpRelaxMagnet}. Section~\ref{sec:DepEncMeas_Polymer} defines the objects that will be used later on, Section~\ref{sec:InfoPerco} contains a short presentation of the graphical representation associated to Glauber dynamic and of what is information percolation. Section~\ref{sec:CoarseGraining} contains the key estimates needed in the proof of analyticity. Finally, Section~\ref{sec:AnalyticityProof} wraps up things together and conclude the proof of Theorems~\ref{thm:PressureAnalytic} and~\ref{thm:CorrFunctAnalytic}.

\section{Dependency Encoding Measures and Associated Polymer Measures}
\label{sec:DepEncMeas_Polymer}

We start by introducing the key notion in the analysis that will follow. Let \(\Lambda\) and \(\Omega\) be two sets: the space and the spin values. Let \(\mu\) be a probability measure on \(\Omega^{\Lambda}\).
\begin{definition}
	A measure \(\Phi\) on \(\Omega^{\Lambda}\times \{A\subset \Lambda\}^{\Lambda}\) is said to be \emph{dependency encoding} for \(\mu\) if, for \((\sigma,X)\sim \Phi\),
	\begin{enumerate}
		\item The first marginal of \(\Phi\) is \(\mu\), i.e. \(\sigma\sim \mu\).
		\item \(v\in X_v\).
		\item If \(f,g:\Omega^{\Lambda}\to\R\) are measurable functions supported on \(\Delta,\Delta'\subset\Lambda\) and \(C,C'\subset\Lambda\) are disjoint sets,
		\begin{equation*}
		\Phi\big(f(\sigma)g(\sigma) \mathds{1}_{X_{\Delta} = C}\mathds{1}_{X_{\Delta'} = C'}\big) = \Phi\big(f(\sigma) \mathds{1}_{X_{\Delta} = C}\big)\Phi\big(g(\sigma) \mathds{1}_{X_{\Delta'} = C'}\big),
		\end{equation*}
		where \(X_{\Delta} = \bigcup_{v\in \Delta} X_v\).
	\end{enumerate}
\end{definition}

For functions \(f_1,...,f_n:\Omega^{\Lambda}\to\R\) supported on \(\Delta_1,...,\Delta_n\) we can decompose \(\Phi(f_1...f_n)\) according to the ``maximal connected components'' of \(\bigcup_{i=1}^{n} X_{\Delta_i}\):
\begin{align*}
\Phi(f_1...f_n) &= \sum_{C_1,...,C_n\subset\Lambda} \Phi\big(f_1...f_n\mathds{1}_{X_{\Delta_1}=C_1}...\mathds{1}_{X_{\Delta_n}=C_n}\big)\\
&= \sum_{B\in\partition([n])} \sum_{(C_b)_{b\in B}}\prod_{\{b,b'\}\subset B} \mathds{1}_{C_b\cap C_{b'}=\varnothing} \prod_{b\in B} \Phi\big(f_b\mathds{1}_{A(C_b,(\Delta_i)_{i\in b})}\big)
\end{align*}
where the second summation is over collections of subsets of \(\Lambda\) indexed by \(B\) and we introduced \(f_b=\prod_{i\in b} f_i\) and
\begin{equation*}
A\big(C,(\Delta_i)_{i\in I}\big) = \big\{ \bigcup_{i\in I} X_{\Delta_i} = C\big\} \cap D\big((\Delta_i)_{i\in I}\big)^c,
\end{equation*}where
\begin{multline*}
D\big((\Delta_i)_{i\in I}\big) = \Big\{ \exists I_1,I_2\neq\varnothing:\ I_1\cap I_2=\varnothing,\ I_1\cup I_2= I,\\ \bigcup_{i\in I_1}X_{\Delta_i} \cap \bigcup_{i\in I_2}X_{\Delta_i} = \varnothing \Big\}.
\end{multline*}
Summing over ordered partitions instead of partitions gives
\begin{multline}
\label{eq:PolymerRep_ProdObs}
\Phi(f_1...f_n) = \sum_{m=1}^{n}\frac{1}{m!}\sum_{B_1\sqcup...\sqcup B_m= [n]}\sum_{C_1,...,C_m}\\ \prod_{\{i,j\}\subset [m]} \mathds{1}_{C_i\cap C_{j}=\varnothing} \prod_{i=1}^m \Phi\big(f_{B_i}\mathds{1}_{A(C_{i},(\Delta_j)_{j\in B_i})}\big),
\end{multline}where the sum over \(B_1,..., B_m\) is over non-empty disjoint subsets of \([n]\).

\subsection*{Polymer Model Associated with a Product of Functions} Take \(\Phi\) dependency encoding for \(\mu\). Take a family of functions \((\tilde{f}_A)_{A\subset \Lambda}\), \(\tilde{f}_A:\Omega^{A}\to\mathbb{C}\). Defining \(f_A=\tilde{f}_A-1\), we have
\begin{equation*}
	\lrangle{\prod_{A\subset \Lambda} \tilde{f}_A} = \sum_{H\subset \calP(\Lambda)} \lrangle{\prod_{A\in H} f_A}.
\end{equation*}
For \(C\subset \Lambda\) define
\begin{equation*}
	\weight(C) = \sum_{H\subset\calP(C)} \Phi\Big( \prod_{A\in H}f_A \mathds{1}_{A( C,H )} \Big).
\end{equation*}
Using \eqref{eq:PolymerRep_ProdObs}, we obtain
\begin{equation}
\label{eq:PolymerRep_General}
	\lrangle{\prod_{A\subset \Lambda} \tilde{f}_A} = 1+\sum_{m\geq 1} \frac{1}{m!} \sum_{C_1,...,C_m} \prod_{\{i,j\}\subset[m]}\delta(C_i,C_j) \prod_{i=1}^m \weight(C_i),
\end{equation}
with \(\delta(C_i,C_j) = \mathds{1}_{C_i\cap C_{j}=\varnothing} \). This is the partition function of a polymer model as described in Appendix~\ref{app:ClusterExp} with polymers being subsets of \(\Lambda\).

\section{Information Percolation}
\label{sec:InfoPerco}

The goal of this Section is to construct a dependency encoding measure for \(\mu_{\beta,h}\). Properties of this measure will then be studied in Section~\ref{sec:CoarseGraining}.

Information percolation is a way to encode the dependencies between regions of space and time for a configuration sampled using a Glauber dynamic. The graphical representation of information dates back to the work of Schonmann~\cite{Schonmann-1994} and was then exploited by Martinelli and Olivieri in~\cite{Martinelli+Olivieri-1994,Martinelli-1999}. It is an instrumental tool in Lubetzky and Sly's proof, \cite{Lubetzky+Sly-2015}, of cut-off for the mixing time of the Glauber dynamic associated to the Ising model. 

\subsection*{Glauber Dynamic}
We consider the Ising model on \(\torusd_N\) (treating \(N=\infty\) via \(\torusd_{\infty}=\Zd\)) at inverse temperature \(\beta<\betac\) with no magnetic field, denoted \(\mu_N\) (the \(N\) will often be dropped from the notation). A classical way to sample a configuration of this model is the Glauber dynamic: consider a continuous time Markov chain on \(\{-1,+1\}^{\torusd_N}\) with generator
\begin{equation*}
	(Lf)(\sigma) =\sum_{v\in \torusd_N} c(v,\sigma)(f(\sigma^v)-f(\sigma)),
\end{equation*} where \(c(v,\sigma)\) are the flipping rates and \(\sigma^v\) denote the configuration obtained from \(\sigma\) by flipping the spin at \(v\). A classical instance of this dynamic is the \emph{heat bath} dynamic where \(c(v,\sigma) = \big(1+e^{-2\beta\sigma(v)\sum_{u\sim v} \sigma(u)}\big)^{-1} \); a graphical interpretation of this dynamic is the following: equip each site with a rate \(1\) Poisson clock and when one clock rings, re-sample the associated site \(v\) according to the measure \(\mu\) conditioned on the value of \(\sigma(u)\ \forall u\neq v \).

One can construct this dynamic as follows:
\begin{itemize}
	\item To each site \(v\in\torusd\), attach an independent copy of \(\big(T,(U_i)_{i\in \N}\big)\) where \(T\) is a Poisson point process of intensity \(1\) on \(\R_{\leq 0}\) and \((U_i)_i\) is an i.i.d. family of uniform random variable on \([0,1]\) (also independent of \(T\)). Denote \(\p,\e\) the law and expectation of this whole family. For a set of sites \(\Delta\), denote \(\calF_{\Delta}\) the sigma algebra generated by the Poisson point processes and the uniform random variables attached to a site in \(\Delta\).
	\item For a time interval \([-t,0]\) we have that the probability of at least two clocks ringing at the same time is \(0\), we can thus look at the totally ordered \emph{update time sequence} \(-t \leq -t_m < ... < -t_1 \leq 0\) (the superposition of the Poisson point processes) and order accordingly the \emph{updated sites sequence} \(v_m,...,v_1\) (\((t_i)_i\) is the sequence of times at which one of the clocks ringed and \(v_i\) is the site at which is attached the clock that ringed at time \(t_i\)) and the \emph{flip probability sequence} \(0\leq u_m,...,u_1 \leq 1\) such that \(u_i\) is the value of \(U^{v_i}_k\) the \(k^{\text{th}}\) uniform random variable attached to \(v_i\), where \(k\) is the number of times \(v_i\) appears in the sequence \(v_{i-1},...,v_1\).
	\item For a given starting configuration \(\eta\in\{-1,+1\}^{\torusd}\), a time interval \([-t,0]\) and an update sequence \((-t_i,v_i,u_i)_{i=1}^{m}\), define the process \((\sigma_s)_{s\in[-t,0]}\) by setting \(\sigma_{-t}=\eta\) and by updating \(\sigma\) at each update time \(t_i\) by: 1) keeping it constant on all \(u\neq v_i\), 2) setting:
	\begin{equation}
		\sigma_{t_i}(v_i) = \begin{cases}
			+1 & \text{if } u_i < \big(1+ e^{-2\beta A }\big)^{-1},\\
			-1 & \text{else},
		\end{cases}
	\end{equation}where \(A\) is the sum of the spins neighbouring \(v_i\). \(\sigma_{s}\) is thus constant on the intervals \([-t_i,-t_{i-1})\). \(\sigma_s\) is a deterministic function of \(\eta\) and the update sequence. Moreover, the lattice FKG property of the model implies that, for a given update sequence, it is a non-decreasing function of \(\eta\).
\end{itemize}

For \(0\geq -t\), we will denote \(\big(\sigma_s^{\eta,t}\big)_{s\in[-t,0]}\) the process on the interval \([-t,0]\) with starting configuration \(\eta\). As \(\mu\) is an invariant measure for this chain and the system is finite, we have
\begin{equation}
	\lrangle{f} = \e[\lim_{t\to\infty} f(\sigma_{0}^{\eta,t})],
\end{equation}
for any \(f\) local and any \(\eta\).

For a given (infinite) update sequence and for a pair of sites \((v,-t),(v',-t')\in\torusd\times \R_{-}\) with \(t>t'\geq 0\), we say that \((v',-t')\) is connected to \((v,-t)\) (denoted \((v',-t')\rightarrow(v,-t)\)) if there exists an update sequence \((-t_i,v_i,u_i)_{i=1}^m\) with
\begin{itemize}
	\item \(v_1= v'\), \(v_{i+1}\sim v_{i},\ i=1,...,m-1\), \(v_m\sim v\);
	\item \(t' \leq t_1\leq t_2 \leq...\leq t_m \leq t\);
	\item The intervals \(\{v_i\}\times[-t_{i-1},t_i)\) for \(i=1,...,m\) are free of updates (where we denoted \(t_0=t'\)).
\end{itemize}

\begin{figure}[h]
	\begin{center}
		\includegraphics[scale=0.6]{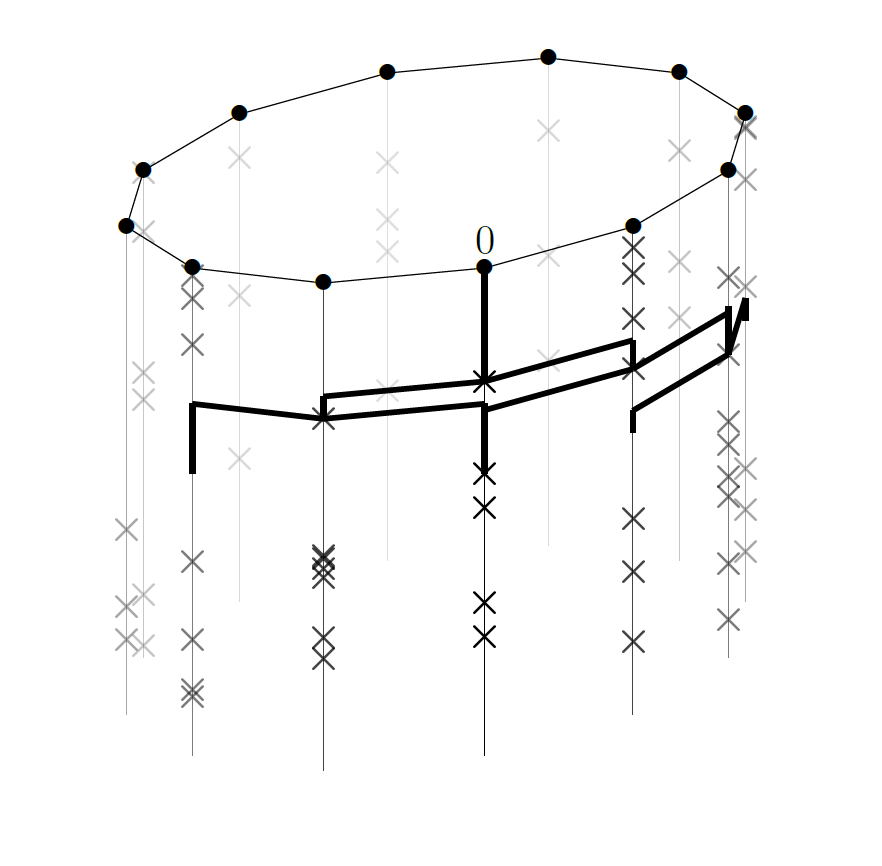}
		\caption{Graphical representation of the Glauber dynamic in dimension \(1\). The marks are the update times. In bold is represented the set of sites connected to \(0\) up to the time of death of the support of \(0\).}
		\label{fig:InformationPerco}
	\end{center}
\end{figure}

\subsection*{Update and Support Functions}
For \(t>t'\geq 0\), \(A\subset\torusd\) and a realization of the update sequence \((t_i,v_i,u_i)_{i=1}^{m}\) during the interval \([-t,-t']\),
\begin{itemize}
	\item Define \(\support(t,t',A)\) to be the \emph{support function} of \(A\) at time \(-t'\) for the dynamic started at time \(-t\): the set of sites \(\Delta\) such that the function \(  \eta\mapsto \sigma_{-t'}^{t,\eta}|_A\) (the configuration \(\sigma_{-t'}^{t,\eta}\) restricted to \(A\)) has support \(\Delta\). In other words, it is the minimal information about time \(-t\) that one needs to reconstruct the configuration at time \(-t'\) on \(A\).
	\item For a set \(A\), define \(\update(t,t',A) = \{v\in\torusd:\ \exists w\in A, s\in [-t,-t'],\ (w,-t')\rightarrow (v,s) \}\), the set of sites that \(A\) reaches through the update sequence in the time interval \([-t,-t']\).
\end{itemize}

Both \(\support\) and \(\update\) are random functions under \(\p\). We refer to \cite{Lubetzky+Sly-2015} for illustrations and a more thorough discussion of those functions properties. To get an intuition on what is going on, one can remark that when the uniform random variable governing an update takes values sufficiently close to \(1\) or \(0\), the update is done uniformly over the values of the neighbouring sites (which participates to the decrease of the support function). Notice that for a given realization of the update sequence \((-t_i,v_i,u_i)_{i=1}^{\infty}\) on the time interval \((-\infty,0]\), both \(t\mapsto \update(t,t',A)\) and \(t\mapsto\mathds{1}_{\support(t,t',A)=\varnothing}\) are non-decreasing functions of \(t\geq t'\). For a set \(A\) and a time \(t'\geq 0\), define the \emph{coupling time} of \(A\times \{-t'\}\) by:
\begin{equation}
\label{eq:CouplingTimeDef}
	\tau_A(t') = \inf\{t\geq t':\ \support(t,t',A)=\varnothing\}.
\end{equation}
Define
\begin{equation*}
	\sigma_{t'}|_A = \lim_{\epsilon\to 0} \sigma_{t'}^{\eta,\tau_A(t')+\epsilon}|_A.
\end{equation*} This limit does not depend on \(\eta\) by definition of \(\tau_A(t')\). Notice that \(\sigma_{t'}^{\eta,t}|_A=\sigma_{t'}|_A\) for any \(\eta\) and any \(t>\tau_A(t')\). In particular,
\begin{equation*}
	\lim_{t\to\infty} \sigma_{t'}^{\eta,t}|_A = \sigma_{t'}|_A,
\end{equation*}
and this limit does not depend on \(\eta\) as the system is finite. Finally, define
\begin{equation}
	\killedUpdate(A,t') = \update(\tau_A(t'),t',A).
\end{equation}
\(\killedUpdate(A,t')\) has the following properties (the second one is a consequence of the first one and of the construction)
\begin{enumerate}[label=(P\arabic*)]
	\item \label{KUPD_prop:Meas} For any \(A\subset\Delta,t'\geq 0\), the event \(\{\killedUpdate(A,t') = \Delta\}\) is \(\calF_{\Delta}\)-measurable.
	\item \label{KUPD_prop:config_Meas} For any set \(A\), and any \(\Delta\supset A\), \(\sigma_{t'}|_A\) is \(\calF_{\Delta}\)-measurable conditionally on \(\{\killedUpdate(A,t') \subset \Delta\}\).
\end{enumerate}

We will denote
\begin{gather*}
	\support(t,A) \equiv \support(t,0,A),\quad \update(t,A) \equiv \update(t,0,A),\\ \tau_A \equiv \tau_A(0),\quad \killedUpdate(A) \equiv \killedUpdate(A,0).
\end{gather*}

We can now describe our dependency encoding measure. Let \(\Phi\) be the marginal of \(\p\) on \(\sigma^{\infty} = \lim_{t\to\infty} \sigma_{0}^{\eta,t}\) and \(X=(\killedUpdate(v))_{v\in \torusd_N}\). By finite energy and finiteness of \(\torusd_N\), \(\sigma^{\infty}\) is almost surely well defined and does not depend on \(\eta\). By properties~\ref{KUPD_prop:Meas} and~\ref{KUPD_prop:config_Meas}, one has
\begin{multline*}
	\Phi\big( \mathds{1}_{\sigma^{\infty}|_{\Delta} =\eta }\mathds{1}_{\sigma^{\infty}|_{\Delta'} =\eta' } \mathds{1}_{\killedUpdate(\Delta)=C} \mathds{1}_{\killedUpdate(\Delta')=C'} \big) = \\
	=\Phi\big( \mathds{1}_{\sigma^{\infty}|_{\Delta} =\eta } \mathds{1}_{\killedUpdate(\Delta)=C} \big)\Phi\big( \mathds{1}_{\sigma^{\infty}|_{\Delta'} =\eta' } \mathds{1}_{\killedUpdate(\Delta')=C'} \big),
\end{multline*}for any \(\Delta,\Delta'\subset \torusd_N\), \(\eta\in\{-1,+1\}^{\Delta}\), \(\eta'\in\{-1,+1\}^{\Delta'}\) and \(C\supset \Delta, C'\supset \Delta'\) with \(C\cap C'=\varnothing\). Moreover, by definition, \(v\in\killedUpdate(v)\), and, by construction, \(\sigma^{\infty}\sim \mu_{N}\). Thus, \(\Phi\) is dependency encoding for \(\mu_{N}\).

In the same fashion, we can construct dependency encoding measures for \(\mu\) seen as a measure on \(S^{\bbL_L}\) with \(S=\{-1,+1\}^{B_L}\) (simply look at blocks of spin as the new spin) by setting \(X(v)=[\killedUpdate(B_L(v))]_L\) for each \(v\in\bbL_L\). We denote these measures by \(\Phi^L\).

The main input we will need is a result of Martinelli and Olivieri \cite[Section 3]{Martinelli+Olivieri-1994} (see also \cite[Section 4.1]{Martinelli-1999})
\begin{theorem}
	\label{thm:ExpDecSUP}
	For any point \((\beta,h)\) such that \(\mu_{\beta,h}\) is weak mixing, there exist \(\Cl{expDecSUP_rate}>0,\Cl{expDecSUP_cst}> 0\) such that for any \(v\in\torusd\) and any \(t,t'\geq 0\),
	\begin{equation*}
	\p(\support(t+t',t,v)\neq \varnothing) \leq \Cr{expDecSUP_cst}e^{-\Cr{expDecSUP_rate} t'}.
	\end{equation*}
\end{theorem}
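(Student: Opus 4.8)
The plan is to reduce the statement, via attractivity, to a quantitative local coupling estimate for the monotone grand coupling of the Glauber dynamics, and then to obtain that estimate by a space--time renormalisation in which the weak mixing hypothesis is invoked only at one large, but fixed, spatial scale. (This is essentially the Martinelli--Olivieri argument; what follows is a reconstruction of it.)

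\emph{Reduction to the grand coupling.} Along a fixed update sequence $\eta\mapsto\sigma_{-t}^{t+t',\eta}$ is non-decreasing, so the support of $\eta\mapsto\sigma_{-t}^{t+t',\eta}|_{\{v\}}$ is empty exactly when this monotone function is constant, i.e. when the two processes run on $[-(t+t'),-t]$ from the all-$(+)$ and the all-$(-)$ configurations already agree at $(v,-t)$. By stationarity of the Poisson/uniform family under time translation, the probability of this event depends only on $t'$; hence it suffices to bound $\p\big(\sigma^{+}_{0}(v)\neq\sigma^{-}_{0}(v)\big)$, where $\sigma^{+}$ and $\sigma^{-}$ are run on $[-t',0]$ from the maximal, resp. minimal, configuration on $\torusd$. (The case $N=\infty$ is $\Zd$; for bounded $N$ the bound is a soft consequence of the exponential ergodicity of a finite chain, so there is no harm in thinking of $N$ large or infinite.)

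\emph{Localisation.} The update set $\update(t',0,v)$ is a function of the clocks alone and its outward growth is dominated by a branching random walk, so there is $\nu=\nu(d)$ with $\p\big(\update(t',0,v)\not\subseteq B_{\nu t'}(v)\big)\leq Ce^{-c t'}$. On the complement, $\sigma^{\pm}_{0}(v)$ depends only on the updates inside $B_{\nu t'}(v)$ and on the (constant) initial data there, hence it coincides with the value produced by the dynamics confined to $\Lambda:=B_{\nu t'}(v)$ with, say, the ``favourable'' boundary condition ($+$ if $h\geq0$, $-$ if $h<0$). Thus, up to an additive error $Ce^{-ct'}$, it remains to prove, with $C,c$ independent of $\Lambda$: started from the all-$(+)$ and from the all-$(-)$ configuration, the two dynamics in $\Lambda$ with the favourable boundary condition agree with each other at the centre $v$ after time $t'$ with probability at least $1-Ce^{-ct'}$.

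\emph{Block renormalisation, and the main obstacle.} Fix a spatial scale $L$, large in terms of the weak mixing constants of $\mu_{\beta,h}$, and a time width $\ell=\ell(L)$. By attractivity the disagreement region between the two coupled processes only shrinks in time; coarse-grained to the blocks $(B_L(w))_{w\in\bbL_L}$ it defines a decreasing set-valued process whose space--time trace I would dominate by a subcritical oriented percolation, so that the first time at which the block of $v$ leaves that trace has exponential tails. Summing the resulting geometric factors over the $\asymp t'/\ell$ time layers then gives $\p\big(\sigma^{+}_{0}(v)\neq\sigma^{-}_{0}(v)\big)\leq Ce^{-ct'}$, which together with the previous step proves the theorem. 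The per-block survival probability is made small by the key local input: for an arbitrary configuration prescribed on $\partial^{\exterior}B_L(w)$, the dynamics run inside $B_L(w)$ for time $\ell$ from the all-$(-)$ configuration produces, on $B_{\lfloor L/2\rfloor}(w)$, the same values as the one from the all-$(+)$ configuration with probability at least $1-p(L)$, where $p(L)\to0$ as $L\to\infty$. This is where $WM(\mu_{\beta,h})$ enters: weak mixing forces the finite-volume Gibbs measures in $B_L(w)$ with $+$ and with $-$ boundary condition to be $Ce^{-cL}$-close on $B_{\lfloor L/2\rfloor}(w)$, and, combined with attractivity and the sandwiching of any configuration between the all-$(-)$ and all-$(+)$ ones, this drives the local relaxation (for $h\neq0$ it amounts to plus droplets nucleating and invading near $v$ at a rate not depending on $\Lambda$; for $h=0$, $\beta<\betac$ it is the classical uniform-in-volume high-temperature relaxation). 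The delicate point --- the one requiring the Martinelli--Olivieri analysis rather than a soft estimate --- is precisely this passage from weak mixing of the static measure to the block-scale local relaxation, uniformly in the prescribed boundary configuration and in the volume; it is also what makes the conclusion hold in every dimension, without passing through strong mixing or complete analyticity.
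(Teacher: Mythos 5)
Two preliminary remarks. First, the paper does not actually prove Theorem~\ref{thm:ExpDecSUP}: it is imported as an external input from Martinelli--Olivieri (Section~3 of \cite{Martinelli+Olivieri-1994}, see also \cite[Section~4.1]{Martinelli-1999}), so there is no in-paper argument to compare with; your text is, as you yourself say, a reconstruction of the cited proof. Second, the parts of your sketch that are soft are fine and match the standard scheme: the reduction by monotonicity of the grand coupling to the disagreement probability $\p(\sigma^+_0(v)\neq\sigma^-_0(v))$, time stationarity, and the finite-speed-of-propagation localisation to a box of radius of order $t'$.

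The gap is in the renormalisation step, and it is not only that the key estimate is asserted rather than proved --- it is stated in a form that cannot work. A good block in an oriented-percolation domination must certify that the value at the centre of $B_L(w)$ at the end of the time slab is determined by the block's own updates \emph{irrespective of the space-time boundary data}, which keeps evolving (and keeps reinjecting disagreement) during the slab; i.e.\ it must be a statement about the support/influence function of the block, in the spirit of the events $A_l(w,t')$ of Section~\ref{sec:CoarseGraining}, not about coupling of the $\pm$ dynamics in $B_L(w)$ under a \emph{frozen} prescribed boundary configuration. Your ``key local input'' as written (frozen arbitrary boundary condition, agreement on $B_{\lfloor L/2\rfloor}(w)$ after time $\ell(L)$ with $p(L)\to0$) is, with $\ell(L)$ unconstrained, true for trivial reasons --- a finite chain couples eventually, uniformly over the finitely many boundary conditions --- and uses no weak mixing at all; if it sufficed for the domination, the argument would prove the unconditional statement, which is false for $h=0$, $\beta>\betac$, where the disagreement probability does not even tend to $0$. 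If instead $\ell(L)$ is kept moderate, the uniform-in-boundary-condition relaxation you invoke is exactly the kind of statement that fails in part of the weak mixing region (low temperature, small $h>0$, $d\geq 3$; see the paper's remark citing \cite{Basuev-2007}). So the passage ``static weak mixing $+$ attractivity drives the block-scale relaxation'' is precisely the nontrivial content of the cited Martinelli--Olivieri analysis (a recursive/bootstrap scheme, not a one-scale application of WM to a frozen-boundary block), and naming it as the delicate point does not close the proof.
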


This result is the only place where we use that we are working with the Ising model (the lattice FKG property of the Ising model is used in the proof of Theorem~\ref{thm:ExpDecSUP}). The rest of the construction only depends on the spatial Markov property and on finite range interactions (even the finiteness of the spin space is not used, the whole construction thus would go through for the Potts model or for spin \(O(N)\) models if Theorem~\ref{thm:ExpDecSUP} was proved to hold in those cases).

\section{Coarse Graining of \(\killedUpdate\)}
\label{sec:CoarseGraining}

In this whole Section, we make the implicit assumption that Theorem~\ref{thm:ExpDecSUP} holds. Its validity thus has to be added in the hypotheses of each statement. We also work with \(d\geq 1\) fixed, the constants appearing can depend on \(d\). The goal of this Section is the proof of

\begin{lemma}
	\label{lem:ExpDecKilledUpdate}
	There exist \(\Cl[expoThm]{expDecKUPD_L}>0, \Cl[expoThm]{SetVolCostKUPD_L}\geq 0\) and \(L_0\geq 0\) such that for any \(L\geq L_0\), \(V\subset\bbL_L\) and \(N\) large enough (as function of \(V\)) one has,
	\begin{equation}
	\label{eq:ExpDecG(V)}
	\p\big(|[\killedUpdate(\bar{V})]_L| \geq M\big)\leq e^{-\Cr{expDecKUPD_L} M L} e^{\Cr{SetVolCostKUPD_L}|V|L},
	\end{equation}
	where \(\bar{V} = \bigcup_{v\in V} B_L(v) \) and \(N\) appears in an implicit fashion as the size of the system we work in.
\end{lemma}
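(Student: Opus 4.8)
The plan is to bound the probability that $[\killedUpdate(\bar V)]_L$ is large by a union bound over all connected shapes $S\subset\bbL_L$ that $[\killedUpdate(\bar V)]_L$ could equal, together with a decoupling argument that lets one factorize the probability that a given shape is realized into a product of ``local'' costs controlled by Theorem~\ref{thm:ExpDecSUP}. Since $[\killedUpdate(\bar V)]_L$ seen as a subset of $\Zd$ is connected once $\bar V$ is connected (it is a coarse approximation), the event $\{|[\killedUpdate(\bar V)]_L|\geq M\}$ forces the existence of an $\bbL_L$-polyomino $S\supset [\bar V]_L = V$ with $|S|\geq M$ such that $[\killedUpdate(\bar V)]_L = S$; if $V$ is not connected one works component by component and multiplies, which only costs the factor $e^{\Cr{SetVolCostKUPD_L}|V|L}$. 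So the core estimate is: for connected $V$,
\begin{equation}
\label{eq:planShapeSum}
\p\big(|[\killedUpdate(\bar V)]_L|\geq M\big) \leq \sum_{\substack{S\supset V \text{ connected}\\ |S|\geq M}} \p\big([\killedUpdate(\bar V)]_L \supset S\big).
\end{equation}

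The key step is to estimate $\p([\killedUpdate(\bar V)]_L \supset S)$ for a fixed connected shape $S$ with $|S|=k\geq M$. The idea is that for a box $B_L(w)$ with $w\in S\setminus V$ to be touched by $\killedUpdate(\bar V)$, the update process must have propagated from $\bar V$ all the way to a neighbour of $B_L(w)$ before the supports of the initial spins died; one wants to extract from $S$ a sequence (or tree) of ``well-separated'' boxes $w_1,\dots,w_r$ with $r\gtrsim k$ (say $r\geq c\,k$ by a standard combinatorial fact: any connected $\bbL_L$-polyomino of size $k$ contains $\geq c_d k$ vertices pairwise at $\bbL_L$-distance $\geq 3$), and along each edge of the connecting structure the process had to survive across a whole block, which by monotonicity of $t\mapsto\mathds{1}_{\support=\varnothing}$ and Theorem~\ref{thm:ExpDecSUP} costs a factor $\leq \Cr{expDecSUP_cst}e^{-\Cr{expDecSUP_rate}\,cL}$ (the factor $L$ coming from the block width). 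The decoupling in the definition of ``dependency encoding'' — or more directly the spatial/temporal Markov property of the graphical construction, using that on disjoint regions of space the Poisson clocks and uniforms are independent — lets these per-box costs be multiplied, after conditioning on the update sequence in already-explored regions and appealing to property~\ref{KUPD_prop:Meas}. The upshot is a bound of the form $\p([\killedUpdate(\bar V)]_L\supset S) \leq \big(\Cr{expDecSUP_cst}e^{-c'L}\big)^{c_d k}$, uniformly in $N$ large.

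It then remains to sum \eqref{eq:planShapeSum}. The number of connected $\bbL_L$-polyominoes $S$ of size $k$ containing the fixed connected set $V$ is at most $|V|\,(2d)^{O(k)}\leq e^{C|V|}\,\lambda^{k}$ for a lattice-animal constant $\lambda=\lambda(d)$ (this is where a bound exponential in $|V|$, rather than $|V|L$, appears; the extra $L$ is absorbed below). Choosing $L_0$ large enough that $\lambda\cdot\Cr{expDecSUP_cst}^{c_d}e^{-c' c_d L}< e^{-\Cr{expDecKUPD_L}L}$ for all $L\geq L_0$ — possible since the exponential in $L$ beats the fixed base $\lambda$ and the fixed constant $\Cr{expDecSUP_cst}^{c_d}$ — the geometric sum over $k\geq M$ converges and yields $\leq e^{-\Cr{expDecKUPD_L}ML}e^{\Cr{SetVolCostKUPD_L}|V|L}$, with $\Cr{SetVolCostKUPD_L}$ collecting the combinatorial $e^{C|V|}$ factor, the per-component overhead, and a harmless extra $L$. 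The main obstacle is the decoupling/factorization step: making precise, via the graphical representation rather than a naive Markov argument, that the event ``a given well-separated box is reached by the update from an already-reached neighbouring region'' carries an independent small factor — one has to condition carefully so that the relevant clocks and uniforms are fresh, and control the fact that $\killedUpdate$ stops at the (random) coupling time $\tau$, using the monotonicity of $t\mapsto\update(t,t',A)$ and $t\mapsto\mathds{1}_{\support(t,t',A)=\varnothing}$ so that Theorem~\ref{thm:ExpDecSUP} can be applied at a deterministic scale.
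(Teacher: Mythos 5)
There is a genuine gap, and it sits exactly where you flag ``the main obstacle'': the per-shape factorization \(\p([\killedUpdate(\bar V)]_L\supset S)\leq(Ce^{-c'L})^{c_d|S|}\) is asserted but the mechanism you propose does not deliver it. The event ``the block \(B_L(w)\) is reached by \(\killedUpdate(\bar V)\)'' is not a local event at \(w\): it depends on the whole space--time history of the update process emanating from \(\bar V\), and in particular on the global coupling time \(\tau_{\bar V}\), which is a single random variable shared by all blocks. Because the time direction is unbounded, a block can be crossed at a late time, and the cost of that late survival is not paid locally at \(w\); conversely, selecting spatially well-separated \(w_1,\dots,w_r\) does not make the corresponding ``reached'' events independent, and the sequential conditioning you suggest (conditioning on the update sequence in already-explored regions and invoking property~\ref{KUPD_prop:Meas}) does not apply: \ref{KUPD_prop:Meas} gives measurability of \(\{\killedUpdate(A,t')=\Delta\}\) with respect to \(\calF_\Delta\) for the whole set \(\Delta\), not a way to extract an independent small factor per block along an exploration whose conditioning tilts the law of the clocks in the explored region. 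Also note that a shape \(S\) of size \(k\) can have diameter of order \(k^{1/d}\), so any argument whose cost is really driven by propagation distance or by survival time alone gives \(e^{-cLk^{1/d}}\), not the needed \(e^{-cLk}\); to charge a cost per block one must attach to (almost) every block of \(S\) a genuinely local failure event.

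This is precisely what the paper's proof constructs, and it is the missing ingredient in your plan. The paper coarse-grains \emph{space--time} into blocks \(B_L(v)\times[t,t+\epsilon L]\) indexed by \(\Gamma=\bbL_L\times\epsilon L\Z_+\), and defines the good event \(A_L(w,t)\) (Lemma~\ref{lem:GoodBoxUnif}) requiring that every point of the block has its killed update confined to \(B_{3L/2}(w)\) \emph{and} couples within time \(\tfrac32\epsilon L\); crucially this event is measurable with respect to the clocks and uniforms in \(B_{3L/2}(w)\times[t,t+\tfrac32\epsilon L]\), so the bad-block field has finite-range dependence, and \cite[Theorem 1.3]{Liggett+Schonmann+Stacey-1997} dominates it by i.i.d.\ Bernoulli sites of parameter \(e^{-cL}\). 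A deterministic inclusion then shows \(\killedUpdate(\bar V)\) is contained in the spatial projection of the Bernoulli cluster of \(V\) plus one boundary layer (good blocks stop the propagation), after which the tail of the cluster size is bounded by the lattice-animal count (Lemma~\ref{lem:ExpDecVol_BernPerco}) -- this last combinatorial and geometric-series step is the part of your plan that does match the paper. To repair your argument you would essentially have to reintroduce the time discretization and a local good/bad event of this type together with either well-separatedness in space--time or a domination theorem such as Liggett--Schonmann--Stacey; as written, the decoupling step would fail.
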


To lighten notation, for the remainder of this Section we will write \(\killedUpdate\equiv \killedUpdateAbrv\).

A direct consequence of Theorem~\ref{thm:ExpDecSUP} is
\begin{lemma}
	\label{lem:ptToBoxDec}
	There exist \(\Cl{expDecUPD_rate}>0,\Cl{expDecUPD_cst}> 0,\epsilon_0>0\) such that for any \(l\geq 0\) and any \(v\in\torusd\),
	\begin{equation}
	\label{eq:ExpDecUPD}
	\p\big( \{\killedUpdateAbrv(v,t)\not\subset B_{l}(v)\} \cup \{\tau_{v}(t) \geq \epsilon_0 l+t\} \big) \leq \Cr{expDecUPD_cst}e^{-\Cr{expDecUPD_rate}l}.
	\end{equation}
\end{lemma}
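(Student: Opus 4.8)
The plan is to split the event in \eqref{eq:ExpDecUPD} into a ``slow coupling'' part, estimated directly from Theorem~\ref{thm:ExpDecSUP}, and an ``information travelled too far'' part, estimated by a finite‑speed‑of‑propagation bound for $\update$; the threshold $\epsilon_0>0$ will be fixed small only at the very end, and $t\geq 0$ is arbitrary throughout (all bounds being uniform in $t$ by time‑stationarity of the graphical construction). For the slow‑coupling part, since $s\mapsto\mathds{1}_{\{\support(s,t,v)=\varnothing\}}$ is non‑decreasing one has $\{\tau_v(t)\geq\epsilon_0 l+t\}\subset\{\support(t+\tfrac{\epsilon_0 l}{2},t,v)\neq\varnothing\}$, whence $\p(\tau_v(t)\geq\epsilon_0 l+t)\leq\Cr{expDecSUP_cst}\,e^{-\Cr{expDecSUP_rate}\epsilon_0 l/2}$ by Theorem~\ref{thm:ExpDecSUP}. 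This already controls the second event in \eqref{eq:ExpDecUPD}.

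On its complement $\{\tau_v(t)<\epsilon_0 l+t\}$, monotonicity of $t\mapsto\update(t,t',A)$ gives $\killedUpdateAbrv(v,t)=\update(\tau_v(t),t,v)\subset\update(\epsilon_0 l+t,t,v)$, and the latter set has, by stationarity of the clocks, the same law as $\update(\epsilon_0 l,0,v)$. So it remains to bound $\p\big(\update(\epsilon_0 l,0,v)\not\subset B_l(v)\big)$. Here I would unwind the definition of $\rightarrow$: if $\update(\epsilon_0 l,0,v)\not\subset B_l(v)$, there are $u\notin B_l(v)$ and $s\in[-\epsilon_0 l,0]$ with $(v,0)\rightarrow(u,s)$, hence a nearest‑neighbour walk $(v_1=v,v_2,\dots,v_m)$, necessarily with $m\geq l$ since $\norm{v_m-v}_{\infty}\geq l$, together with an update at $v_i$ at time $-t_i$ for some $0\leq t_1\leq\dots\leq t_m\leq\epsilon_0 l$. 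Keeping only the first $l$ steps produces a walk $\gamma=(\gamma_1=v,\dots,\gamma_l)\subset B_l(v)$ and times $0\leq t_1\leq\dots\leq t_l\leq\epsilon_0 l$ with an update at $\gamma_i$ at time $-t_i$.

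There are at most $(2d)^{l-1}$ walks of this type, and for a fixed one, since the update times at distinct sites are independent rate‑$1$ Poisson processes, the expected number of increasing $l$‑tuples $(t_1,\dots,t_l)$ of the above form equals the volume of the simplex $\{0\leq t_1\leq\dots\leq t_l\leq\epsilon_0 l\}$, that is $(\epsilon_0 l)^l/l!$. A union bound over $\gamma$ together with Markov's inequality then gives
\begin{equation*}
\p\big(\update(\epsilon_0 l,0,v)\not\subset B_l(v)\big)\;\leq\;(2d)^{l-1}\,\frac{(\epsilon_0 l)^l}{l!}\;\leq\;(2de\,\epsilon_0)^l,
\end{equation*}
using $l!\geq(l/e)^l$. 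Fixing $\epsilon_0<\tfrac1{2de}$ makes the right‑hand side exponentially small in $l$; combining it with the slow‑coupling estimate and enlarging the prefactor to absorb the finitely many small values of $l$ yields \eqref{eq:ExpDecUPD} with suitable $\Cr{expDecUPD_cst},\Cr{expDecUPD_rate}>0$.

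The one genuinely delicate step is the finite‑speed estimate. Bounding $\p(\update(\epsilon_0 l,0,v)\not\subset B_l(v))$ through the sheer number of updates in $B_l(v)\times[-\epsilon_0 l,0]$ is hopeless, that number being Poisson with mean of order $l^{d+1}$; what makes the argument work is that a connection forces a chain of updates that is \emph{ordered in time} along a \emph{fixed} walk, and the factor $1/l!$ this produces is exactly what beats the $(2d)^{l-1}$ entropy of walks — which is also what forces $\epsilon_0$ to be taken small.
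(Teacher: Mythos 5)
Your proof is correct and follows essentially the same route as the paper: split off the event $\{\tau_v(t)\geq\epsilon_0 l+t\}$ via Theorem~\ref{thm:ExpDecSUP}, then control the spread of $\update$ in a window of length $\epsilon_0 l$ by counting time-ordered update chains along fixed nearest-neighbour walks, where the $1/l!$ from the ordered times beats the $(2d)^l$ entropy once $\epsilon_0$ is small. The only (cosmetic) differences are that you truncate the walk at $l$ steps and use a first-moment/Markov bound on the simplex volume, whereas the paper sums over all path lengths $k\geq l$ and invokes a Poisson tail (Bennett) bound per path.
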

\begin{proof}
	It is sufficient to prove the result for \(t=0\). Let \(\epsilon>0\) be small to be chosen later. First,
	\begin{align*}
		&\p\big( \{\killedUpdateAbrv(v)\not\subset B_{l}(v)\} \cup \{\tau_{v} \geq \epsilon l\} \big) \leq\\
		&\leq \p\big( \killedUpdateAbrv(v)\not\subset B_{l}(v), \tau_{v} < \epsilon l \big) + \p\big( \support(\epsilon l, v)\neq \varnothing \big)\\
		&\leq \p\big( \killedUpdateAbrv(v)\not\subset B_{l}(v), \tau_{v} < \epsilon l \big) + \Cr{expDecSUP_cst}e^{-\Cr{expDecSUP_rate}\epsilon l},
	\end{align*}
	by Theorem~\ref{thm:ExpDecSUP}. The remaining probability is then upper bounded by the probability that there exists an update path of length at least \(l\) in the time interval \([-\epsilon l, 0]\) starting at \(v\). For such a given path with \(k\) vertices, the probability that this path occurs is bounded from above by the probability that a Poisson random variable of parameter \(\epsilon l\) is \(\geq k\). Bennett's inequality and a union bound give
	\begin{equation*}
		\p\big( \killedUpdateAbrv(v)\not\subset B_{l}(v), \tau_{v} < \epsilon l \big) \leq \sum_{k=l}^{\infty}(2d)^{k} e^{-k(\log(k/\epsilon l) -1)}\leq e^{-l+1},
	\end{equation*}
	if we choose \(\epsilon \leq e^{-2-\log(2d)}\). Taking \(\epsilon_0= e^{-2-\log(2d)}\), \(\Cr{expDecUPD_cst} = \Cr{expDecSUP_cst}+e\) and \(\Cr{expDecUPD_rate} = \min(\Cr{expDecSUP_rate}\epsilon_0, 1)\), we get the claim.
\end{proof}
In particular, for any \(A\) finite, \(\killedUpdateAbrv(A)\) is a.s. uniformly bounded in \(N\) and thus well defined also for the dynamic in infinite volume. From now on, \(\epsilon>0\) will be fixed and equal to the \(\epsilon_0\) provided by Lemma~\ref{lem:ptToBoxDec} (\(\epsilon= e^{-2-\log(2d)}\)).

Define then
\begin{equation*}
	D_l(v,t) = \big\{ \killedUpdateAbrv(v,t)\subset B_{l}(v), \tau_v(t)\leq \epsilon l+t \big\}.
\end{equation*}
Lemma~\ref{lem:ptToBoxDec} gives
\begin{equation}
\label{eq:BoxConstraintMem}
	\p\big( D_l(v,t) \big) \geq 1-\Cr{expDecUPD_cst}e^{-\Cr{expDecUPD_rate}l}.
\end{equation}

Define now an event controlling the ``cluster'' of a space time bloc:
\begin{equation*}
	A_l(w,t') = \bigcap_{v\in B_l(w)}\bigcap_{t\in[t',t'+\epsilon l]} \big\{ \killedUpdateAbrv(v,t)\subset B_{\frac{3}{2} l}(w), \tau_v(t)\leq t'+\frac{3}{2}\epsilon l \big\},
\end{equation*}
and denote \(A_l\equiv A_l(0,0)\). Notice that \(A_l(w,t')\) is measurable with respect to the sigma-algebra generated by the restriction of the Poisson point process to \(B_{\frac{3}{2}l}(w)\times [t',t'+\frac{3}{2}\epsilon l]\) and the associated \(U_i^v\)'s.

The next Lemma is the last control on the geometry of \(\killedUpdateAbrv\) that we will need to do our analysis.
\begin{lemma}
	\label{lem:GoodBoxUnif}
	There exist \(\Cl{cst_n2}\geq 0\) such that for any \(v\in\torusd\), any \( t\geq 0 \) and \(l\geq \Cr{cst_n2}\),
	\begin{equation*}
		\p(A_l(w,t')) \geq 1- e^{- \Cr{expDecUPD_rate}l/4}.
	\end{equation*}
\end{lemma}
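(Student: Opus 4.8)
The plan is to upgrade the single-site estimate of Lemma~\ref{lem:ptToBoxDec} to a statement that controls \emph{simultaneously} all space-time starting points in the block $B_l(w)\times[t',t'+\epsilon l]$. By translation invariance in space and time we may take $w=0$, $t'=0$ and work with $A_l\equiv A_l(0,0)$. The event $A_l$ fails if there is some $v\in B_l(0)$ and some $t\in[0,\epsilon l]$ for which either $\killedUpdateAbrv(v,t)\not\subset B_{\frac32 l}(0)$ or $\tau_v(t)>\frac32\epsilon l$. First I would note that if $\killedUpdateAbrv(v,t)\subset B_{l/2}(v)$ and $\tau_v(t)\le \epsilon\cdot\frac{l}{2}+t$ hold, then since $v\in B_l(0)$ we have $B_{l/2}(v)\subset B_{3l/2}(0)$ and $t+\frac{\epsilon l}{2}\le \epsilon l+\frac{\epsilon l}{2}=\frac32\epsilon l$, so the good event $D_{l/2}(v,t)$ implies the condition defining $A_l$ for that $(v,t)$. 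Hence
\begin{equation*}
\p(A_l^\comp)\le \p\Big(\exists v\in B_l(0),\ \exists t\in[0,\epsilon l]:\ D_{l/2}(v,t)\text{ fails}\Big).
\end{equation*}

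The issue is that this is a union over the uncountable set of times $t\in[0,\epsilon l]$, so a naive union bound does not apply. The key step — and the main obstacle — is a discretization/monotonicity argument reducing the continuum of times to finitely many. The point is that $\killedUpdateAbrv(v,t)$ and $\tau_v(t)$ depend only on the realization of the Poisson process and the uniforms in a bounded space-time window, and the relevant events are ``almost monotone'' in $t$: moving the starting time $-t$ earlier only enlarges the set of potential update paths. Concretely, I would cover $[0,\epsilon l]$ by $O(l)$ intervals $I_k=[k\delta,(k+1)\delta]$ of a fixed length $\delta$ (one may take $\delta$ of order $1$, say $\delta=1$), and show that on the event that the Poisson process restricted to $B_{3l/2}(0)\times[0,2\epsilon l]$ has no more than $C l^{d+1}$ points (which holds off an event of probability $e^{-cl^{d+1}}$ by a standard Poisson tail bound), the map $t\mapsto \killedUpdateAbrv(v,t)$ can change only at the $O(l^{d+1})$ update times. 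Then the bad event for \emph{all} $t$ in the window reduces to a union over the $O(l)$ grid points of times and the $O(l^{d+1})$ update times, and for each such fixed time one applies the single-site bound \eqref{eq:ExpDecUPD} of Lemma~\ref{lem:ptToBoxDec} with parameter $l/2$, which gives probability $\le \Cr{expDecUPD_cst}e^{-\Cr{expDecUPD_rate}l/2}$.

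Putting this together, a union bound over the $\le |B_l(0)|\cdot O(l^{d+1}) = O(l^{2d+1})$ pairs (site, relevant time) yields
\begin{equation*}
\p(A_l^\comp)\le C l^{2d+1}e^{-\Cr{expDecUPD_rate}l/2}+e^{-cl^{d+1}},
\end{equation*}
and since the polynomial prefactor is absorbed by halving the rate again, there is $\Cr{cst_n2}$ such that for $l\ge\Cr{cst_n2}$ the right-hand side is at most $e^{-\Cr{expDecUPD_rate}l/4}$, as claimed. The step I expect to require the most care is the reduction from the continuum of starting times to finitely many via the locality of the graphical construction: one must argue cleanly that the event defining $A_l$ is determined by the Poisson process and uniforms in the fixed window $B_{3l/2}(0)\times[0,\frac32\epsilon l]$ (so that on the high-probability event that this window has few updates, only finitely many ``candidate'' starting times matter), rather than depending on information arbitrarily far back in time — and here one uses that on the complement of the bad event each $\tau_v(t)$ is itself at most $\frac32\epsilon l$, so no path reaching outside the window is ever needed.
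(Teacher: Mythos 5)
Your overall strategy is the same as the paper's: reduce $A_l^{\comp}$ to failures of the single-site events $D_{l/2}(v,t)$ for $(v,t)\in B_l\times[0,\epsilon l]$, use that the relevant quantities are piecewise constant in the observation time (they can only change at update times of the site), and then control a finite family of times by Lemma~\ref{lem:ptToBoxDec}, absorbing the polynomial prefactor by halving the rate. The gap is in the step you yourself flag as the crucial one. After reducing to ``the $O(l^{d+1})$ update times'' you say that ``for each such fixed time one applies the single-site bound \eqref{eq:ExpDecUPD}''. But those times are not fixed: they are points of the Poisson process, and Lemma~\ref{lem:ptToBoxDec} is a statement about a deterministic $t$. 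A union bound over randomly located times cannot simply invoke the deterministic-time estimate; moreover, conditioning on the presence of an update at $(v,-t)$ changes the law of the graphical data entering $D_{l/2}(v,t)$ (the observable $\sigma_{-t}(v)$ contains that very update), so even a Palm/Mecke-type argument needs an additional decoupling step. Knowing that the space-time window contains at most $Cl^{d+1}$ points tells you nothing about the probability of $D_{l/2}(v,T)^{\comp}$ at a random $T$; and the $O(l)$ deterministic grid points by themselves control nothing in between, since passing from a grid time to a nearby time again requires the piecewise-constancy reduction, which leads straight back to the random update times.

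The paper resolves precisely this point with a finer discretization: each site's time line is cut into intervals of length $\delta$, and each interval is classified according to whether it contains exactly one Poisson point or at least two. On the one-point event, the failure of $D_{l/2}$ at the random point is compared with the failure at the deterministic endpoint $\bar t=\lceil t/\delta\rceil\delta$, and the key observation is that the event $D_{l/2}(v,\bar t)^{\comp}$ is measurable with respect to the graphical data at depths at least $\bar t$, hence independent of the occurrence of the single point in the shallower interval; this is what legitimately produces the factor $\delta e^{-\delta}\,\p\bigl(D_{l/2}^{\comp}\bigr)$ per interval, while intervals with two or more points cost $\delta^2/2$ each and vanish as $\delta\to 0$. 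To complete your argument you would need either this device or an equivalent Mecke-formula computation of the expected number of ``bad'' Poisson points together with the same decoupling. A further small case you leave untreated (and which your union bound over update times cannot see) is a site having no update at all within the relevant time window, in which case its coupling time exceeds $\tfrac32\epsilon l$ without any Poisson point being involved; this only contributes an extra term of order $|B_l|\,e^{-\epsilon l/2}$, but it should be accounted for separately.
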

\begin{proof}
	It is sufficient to prove the result for \(A_l\). Notice that \(A_l^c\) implies the existence of a point \((v,t)\in B_l\times [0, \epsilon l] \) such that
	\begin{itemize}
		\item \((v,t)\) is a point in the Poisson point process.
		\item \( D_{l/2}(v,t)^c\) occurs.
	\end{itemize}
	Discretize the time dimension of \(B_l\times [0,\epsilon l]\) by intervals of length \(\delta\) (we thus have \(|B_l|\frac{\epsilon l}{\delta}\) intervals). The lastly mentioned event implies the existence of at least one interval containing at least one point \((v,-t)\) of the PPP with \(D_{l/2}(v,t)^c\) occurring. If there is only one point of the PPP, \((v,-t)\), in a given interval, the event \(D_{l/2}(v,t)^c\) occurs if and only if \(D_{l/2}(v,\bar{t})^c\) occurs where \(\bar{t}= \lceil t/\delta\rceil \delta\). For a given interval \(I\), denote \(W_I^1\) the event that \(I\) contains exactly one point \((v,-t)\) and that \(D_{l/2}(v,\bar{t})^c\) occurs (notice that \(\bar{t}\) depends only on \(I\)) and denote \(W_I^2\) the event that \(I\) contains at least two points. This chain of upper bounds and a union bound give
	\begin{multline*}
		1-\p(A_l) \leq \sum_{I} \p(W_I^1) + \p(W_I^2)\leq\\
		\leq \sum_{I} \delta e^{-\delta}\p(D_{l/2}^c) + \frac{\delta^2}{2}
		= |B_l|\epsilon l e^{-\delta} \p(D_{l/2}^c) + \frac{\delta}{2} |B_l|\epsilon l,
	\end{multline*} for any \(\delta>0\). Letting \(\delta\to 0\) and using \eqref{eq:BoxConstraintMem}, we obtain
	\begin{equation*}
		1-\p(A_l) \leq (2l+1)^d\epsilon l \Cr{expDecUPD_cst}e^{-\Cr{expDecUPD_rate}l/2}.
	\end{equation*}
\end{proof}

We now partition the semi-continuous space \(\torusd_N\times(-\infty,0]\) in boxes as follows: take \(L>0\) even. We suppose \(N\) is divisible by \(2L+1\) and write \(N/(2L+1)=K\). Define the ``\((d+1)\)-dimensional half-lattice'':
\begin{equation*}
	\Gamma = \bbL_L\times \{k\epsilon L :\ k\in\Z_+ \}.
\end{equation*}
We see \(\bbL_L\) as a subset of \(\Gamma\) through the identification with \(\bbL_L\times \{0\}\). Define the boxes
\begin{equation*}
	B_L'(v,t) = B_L(v)\times [t,t+\epsilon L].
\end{equation*}
We equip \(\Gamma\) with the following graph structure: the vertices are the sites in \(\Gamma\) and two vertices \((v,t),(v',t')\) form an edge if
\begin{itemize}
	\item \(t=t'\) and \(\norm{v-v'}_{\infty}=2L+1\),
	\item \(\vert t-t'\vert=\epsilon L\) and \(\norm{v-v'}_{\infty}=2L+1\),
	\item \(\vert t-t'\vert=\epsilon L\) and \(v=v'\).
\end{itemize}
Notice that the set of boxes \(B'_L(v,t)\) with \((v,t)\in \Gamma\) forms a covering of \(\torusd_N\times [0,\infty)\) and that two boxes in this covering intersect only if they are nearest neighbours in the graph obtained from \(\Gamma\). We can identify (as graphs) \(\Gamma\) with \(\torusd_{K}\times \Z_+\). Connections in \(\Gamma\) are exactly \(*\)-connections in \(\torusd_{K}\times \Z_+\) (i.e.: put an edge between two sites if \(\norm{i-j}_\infty= 1\)). Denote \(d_{\Gamma}\) the graph distance in this graph.

Now, a site \((w,t)\in\Gamma\) is called \emph{good} if \(A_L(w,t)\) occurs and \emph{bad} otherwise. One obtains a site percolation configuration \(\omega\) on \(\Gamma\) by setting \(\omega_{z} =1-\mathds{1}_{A_L(z)}\) (the open sites correspond to bad boxes). Lemma~\ref{lem:GoodBoxUnif} together with the fact that the state of a site \((w,t)\) is independent of the states of the sites outside of \(B_{3L}(w)\times[t-\frac{3}{2}\epsilon L, t+\frac{3}{2}\epsilon L]\) implies that uniformly over the value of \((\omega_w)_{d_{\Gamma}(w,v)>1}\), the probability of \(\{\omega_v=0\}\) is greater or equal to \(1-e^{- \Cr{expDecUPD_rate}L/4}\). We then use~\cite[Theorem 1.3]{Liggett+Schonmann+Stacey-1997} to obtain that this site percolation is dominated by a Bernoulli percolation of parameter \(p = e^{-\Cl{rate_BernoulliPerco}L}\) where \(\Cr{rate_BernoulliPerco}>0\) depends only on \(\Cr{expDecUPD_rate}/4\) and on the maximal degree of \(\Gamma\) (\(3^{d+1}-1\)). Denote \(\eta\sim P_p\) a Bernoulli percolation of parameter \(p\) on the sites of \(\Gamma\).

For any \(z\in\Gamma\), define \(C_z(\eta) = \{v\in\Gamma:\ z\xleftrightarrow{\eta} v  \}\) the cluster of \(z\) (in \(\eta\), it is empty if \(\eta_z=0\)) and \(\partial C_{z}\) the set of sites in \(\Gamma\setminus C_z\) neighbouring a site of \(C_z\). By convention, set \(\partial C_z =\{z\}\) if \(C_z=\varnothing\). In the same spirit, for \(\Delta\subset \Gamma\) define \(C_{\Delta}=\bigcup_{z\in\Delta} C_z\). For a set of sites \(\Delta\subset \Gamma\), define its \emph{projection on space} by:
\begin{equation*}
	Proj(\Delta) = \big\{v\in\bbL_L :\ \exists t\geq 0, (v,t)\in \Delta \big\}.
\end{equation*}

The interest of all these objects is the inclusion (for any \(V\subset\bbL_L\), \(\bar{V}=\bigcup_{v\in V}B_L(v) \)):
\begin{equation*}
	\killedUpdateAbrv(\bar{V})\subset \bigcup_{w\in Proj(C_{V}\cup\partial C_V)} B_{3L/2}(w),
\end{equation*}
where we implicitly supposed that the process of good boxes and the Bernoulli percolation are sampled via an ordered coupling. In particular, \([\killedUpdateAbrv(\bar{V})]_L\) is included in the spatial projection of the set of sites at distance at most \(1\) (for \(d_{\Gamma}\)) of \(C_{V}\cup\partial C_V\). The cardinality of the latter spatial projection is then less than \(5^d\big(|Proj(C_V)|+|V|\big) \leq 5^{d}\big(|C_V|+|V|\big)\). One has thus the bound
\begin{equation}
\label{eq:BoxesToBernPerco}
	\p(|[\killedUpdateAbrv(\bar{V})]_L| \geq M)\leq P_{p}\big(|C_V| \geq (M-|V|)/5^d\big).
\end{equation}

\begin{lemma}
	\label{lem:ExpDecVol_BernPerco}
	There exists \(L_0\geq 0\) depending on \(\Cr{rate_BernoulliPerco},d\) only such that for \(L\geq L_0\), any set \(V\subset \bbL_L\times\{0\}\) and \(M\geq |V|\),
	\begin{equation*}
	P_{p}\big(|C_{V}|\geq M\big)\leq e^{-\frac{\Cr{rate_BernoulliPerco}}{2} M L}.
	\end{equation*}
\end{lemma}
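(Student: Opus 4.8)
The plan is a standard Peierls-type union bound over ``cluster shapes'' in the bounded-degree graph $\Gamma$, exploiting two structural facts about $C_V$: every one of its vertices is $\eta$-open, and it splits into few connected pieces. Indeed, by definition of $C_z$ a vertex joined to some $z\in V$ by an open path is itself open, so $C_V\subseteq\{z\in\Gamma:\eta_z=1\}$; and the \emph{distinct} sets among $(C_z)_{z\in V}$ are pairwise disjoint connected subgraphs, each containing its root $z\in V$, and there are at most $|V|$ of them. Hence, on $\{|C_V|\geq M\}$ with $M\geq|V|$, there is an \emph{open witness}: pairwise disjoint connected sets $A_1,\dots,A_j$ with $j\leq|V|$, where each $A_i$ is a connected subset of one of those distinct clusters $C_{w_i}$ ($w_i\in V$) containing $w_i$, and $\sum_i|A_i|=M$. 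Such a witness exists because $\sum_i|C_{w_i}|=|C_V|\geq M$, every $C_{w_i}$ contains connected subsets of all sizes $\leq|C_{w_i}|$ rooted at $w_i$, and there are at most $M$ such clusters, so one can distribute $M$ vertices among them. Being a disjoint union of open vertices, $A:=A_1\sqcup\dots\sqcup A_j$ is $P_p$-open with probability exactly $p^{M}$.

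Next I would run the union bound. The set of roots $\{w_1,\dots,w_j\}\subseteq V$ has at most $2^{|V|}$ choices; the sizes $(n_1,\dots,n_j)$ with $n_i\geq1$ and $\sum_i n_i=M$ have at most $2^{M}$ choices; and, by the classical lattice-animal estimate that in a graph of maximal degree $D$ there are at most $\kappa^{n}$ connected subsets of size $n$ through a given vertex (with $\kappa=\kappa(D)$, e.g.\ $\kappa=eD$, and here $D=3^{d+1}-1$), the shapes $A_1,\dots,A_j$ have at most $\prod_i\kappa^{n_i}=\kappa^{M}$ choices. Therefore
\begin{equation*}
P_p(|C_V|\geq M)\ \leq\ 2^{|V|}\,2^{M}\,\kappa^{M}\,p^{M}\ \leq\ (4\kappa\, p)^{M},
\end{equation*}
the last inequality using $|V|\leq M$.

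Finally, plugging $p=e^{-\Cr{rate_BernoulliPerco}L}$ and choosing $L_0=L_0(\Cr{rate_BernoulliPerco},d)$ large enough that $4\kappa(3^{d+1}-1)\leq e^{\Cr{rate_BernoulliPerco}L/2}$ for every $L\geq L_0$, one has $4\kappa p\leq e^{-\Cr{rate_BernoulliPerco}L/2}$, hence $P_p(|C_V|\geq M)\leq e^{-\frac{\Cr{rate_BernoulliPerco}}{2}ML}$, which is the claim. I expect the only slightly delicate point to be the combinatorial bookkeeping together with the animal-counting input; there is no genuine obstacle, since we only claim the rate $\Cr{rate_BernoulliPerco}/2$, i.e.\ half the ``bare'' rate carried by $p=e^{-\Cr{rate_BernoulliPerco}L}$, and this slack comfortably absorbs the subdominant factors $2^{|V|},2^{M},\kappa^{M}$ once $L$ is large.
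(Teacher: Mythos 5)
Your proof is correct and follows essentially the same route as the paper: a Peierls-type union bound in which $C_V$ is decomposed into connected pieces rooted at the points of $V$, rooted lattice-animal counting in the bounded-degree graph $\Gamma$ supplies the entropy factor, each vertex costs $p=e^{-\Cr{rate_BernoulliPerco}L}$, and taking $L$ large absorbs the entropy into half the rate. The only (harmless) difference is that you truncate to a witness of total size exactly $M$ instead of summing over all cluster sizes $K\geq M$ and a geometric series, as the paper does.
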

\begin{proof}
	Fix a total order on \(V\), \(V=\{v_1,...,v_{|V|}\}\). The number of cluster of \(V\) with \(K\) vertices is smaller than or equal to the number of collections \((a_1,...,a_{|V|})\) of lattice animals with \(\sum_{i=1}^{|V|} |a_i|=K\) and \(v_i\in a_i\) or \(a_i=\varnothing\), for \(\ i=1,...,|V| \). To see this we construct an injection from the former set to the latter. The mapping goes as follows: for \(C_{V}=C\) fixed,
	\begin{enumerate}
		\item Set \(k=1\), \(I=\{1,...,|V|\}\).
		\item \label{it:step2}Take \(a_k\) to be the connected component of \(v_k\) in \(C\).
		\item Update \(I=I\setminus\{k\}\), \(C=C\setminus a_k\).
		\item Update \(k=k+1\) and go to step \ref{it:step2}.
	\end{enumerate}
	Then, using classical bounds on rooted lattice animals (see \cite[Section 4.2]{Grimmett-1999}), the number of collections \((a_1,...,a_{|V|})\) with the wanted properties is bounded from above by (for \(K\geq |V|\)),
	\begin{equation*}
		\sum_{\substack{k_1,...,k_{|V|}\geq 0 \\ \sum k_i = K}} \prod_{i=1}^{|V|} e^{c_d k_i}= e^{c_d K} \binom{K+|V|-1}{K}\leq \Big(\frac{e^{c_d+1}2K}{K}\Big)^K\equiv e^{cK}.
	\end{equation*}
	Going back to the initial claim,
	\begin{equation*}
		P_{p}\big(|C_{V}|\geq M\big) \leq \sum_{K=M}^{\infty} p^{K} e^{cK}
		= e^{-(\Cr{rate_BernoulliPerco}-c/L)L M}\sum_{K=0}^{\infty} e^{-(\Cr{rate_BernoulliPerco}L-c)K}.
	\end{equation*}
	Choosing \(L_0\) large enough implies the Lemma.
\end{proof}

In particular, we have that for any \(M\geq 0\), \(P_{p}\big(|C_{V}|\geq M\big)\leq e^{-\frac{\Cr{rate_BernoulliPerco}}{2} (M-|V|) L}\) and thus for any \(M\geq 0\),
\begin{equation*}
	\p(|[\killedUpdateAbrv(\bar{V})]_L| \geq M)\leq e^{-\frac{\Cr{rate_BernoulliPerco}}{2\cdot 5^d} LM}e^{ \frac{\Cr{rate_BernoulliPerco} (5^d +1)}{2 \cdot 5^d} L|V|}
\end{equation*}
by~\eqref{eq:BoxesToBernPerco} (where \(L\) has to be taken large enough to apply all Lemmas in this Section). This concludes the proof of Lemma~\ref{lem:ExpDecKilledUpdate}.

\section{Concluding the Proof of Theorems~\ref{thm:PressureAnalytic} and~\ref{thm:CorrFunctAnalytic}}
\label{sec:AnalyticityProof}
\subsection{Analyticity of the Pressure}

We start by showing how to use a well behaved dependency encoding measure to obtain analyticity of \(\pressure\) in \(\beta\) around a point \((\beta,h)\) such that \(\mu_{\beta,h}\) is weak mixing.
We start by a trivial equality:
\begin{equation}
\label{eq:trivialEquality}
\pressure(\beta+z,h) = \pressure(\beta,h) + \lim_{N\to\infty} \frac{1}{|\torusd_N|} \log\Big( \frac{Z_{N,\beta+z,h}}{Z_{N,\beta,h}}\Big).
\end{equation}
Then, \(\frac{Z_{N,\beta+z,h}}{Z_{N,\beta,h}} = \lrangle{e^{\sum_{\{i,j\}}z\sigma_i\sigma_j }}_{\beta,h} \equiv F_N(z)\). From now on, we see \(\bbL_L\) as a graph by adding edges between \(v\) and \(w\) when \(\normI{v-w}= 2L+1\). Denote this graph \((\bbL_L,E_{\bbL_L})\). One can write
\begin{align*}
	\sum_{\{i,j\}}z\sigma_i\sigma_j &= \sum_{v\in \bbL_{L} } \sum_{ \substack{\{i,j\}\subset B_L(v)\\i\sim j} } z\sigma_i\sigma_j + \sum_{ \substack{\{v,w\}\subset\bbL_L\\ v\sim w} } \sum_{\substack{i\in \partial^i B_L(v),\\ j\in \partial^i B_L(w),\\ i\sim j} } z\sigma_i\sigma_j,\\
	&\equiv \sum_{v\in \bbL_{L} } z f_v(\sigma) + \sum_{ \substack{\{v,w\}\subset\bbL_L\\ v\sim w} }zf_{vw}(\sigma).
\end{align*}
where \(\partial^{i} B_L = \{u\in B_L:\ \exists v\in \Zd\setminus B_L, u\sim v \} \). Now, we see \(\mu_{N}\) as a measure on \((\{-1,+1\}^{B_L})^{\bbL_L}\). Denote \(\Phi^L\) the measure constructed in Section~\ref{sec:InfoPerco}. With the way we chose to see \(\mu\), \(\Phi^L\) is dependency encoding for it.

Using~\eqref{eq:PolymerRep_General}, one obtains
\begin{equation*}
	F_N(z) = 1+\sum_{m\geq 1} \frac{1}{m!} \sum_{C_1,...,C_m} \prod_{\{i,j\}\subset[m]}\delta(C_i,C_j) \prod_{i=1}^m \weight(C_i)
\end{equation*}with \(C_1,...,C_m \subset \bbL_L\), \(\delta(C_i,C_j)=\mathds{1}_{C_i\cap C_j =\varnothing}\) and
\begin{equation*}
	\weight(C) = \sum_{H\subset( E_C\cup C)} \Phi\Big( \prod_{b\in H}(e^{z f_b}-1) \mathds{1}_{A( C,H )} \Big),
\end{equation*}where \(E_C\) is the set of edges with both endpoints in \(C\). Notice that \(\weight(C)=0\) if \(C\) is not connected. We will check the hypotheses of Theorem~\ref{thm:ClusterExp_Convergence} with \(g(C)=|C|\). First notice that for any \(b\in (E_{\bbL_L}\cup\bbL_L)\) and any \(\sigma\),
\begin{equation*}
	|f_b(\sigma)|\leq 2d(2L+1)^d\leq 6^d L^d.
\end{equation*}
Let \(1\geq \epsilon>0\) be small to be chosen later. Then, for \(|z|\leq \epsilon\), writing \(V_H = \bigcup_{b\in H} b \subset \bbL_L\) and \(\bar{V}_H=\bigcup_{v\in V_H} B_L(v)\subset \torusd_N\), for any \(a\in [0,1]\),
\begin{multline}
\label{eq:weightBound_1}
	|\weight(C)| \leq \sum_{H\subset (E_C\cup C)} (e^{6^d L^d}\epsilon)^{|H|}\Phi\big( |[\killedUpdate(\bar{V}_H)]_L| \geq |C|\big)\leq\\
	\leq \sum_{|H|\leq a|C|} e^{-\Cr{expDecKUPD_L} |C| L} e^{\Cr{SetVolCostKUPD_L}2|H|L} + (e^{6^d L^d}\epsilon)^{a |C|}\sum_{k= a|C|}^{(2d+1)|C|} \binom{(2d+1)|C|}{k}
\end{multline}
where we supposed \(\epsilon\leq e^{-6^dL^d}\) and used Lemma~\ref{lem:ExpDecKilledUpdate}. The first term is then less than \(e^{-\Cr{expDecKUPD_L} |C| L} e^{\Cr{SetVolCostKUPD_L}2a|C|L}2^{(2d+1)|C|}\). The sum in the second term is less than
\begin{equation*}
	\sum_{k= a|C|}^{3d|C|-1} \Big(\frac{e3d|C|}{k}\Big)^{k} \leq \sum_{k= 0}^{3d|C|-1} \Big(\frac{e3d}{a}\Big)^{k} \leq \Big(\frac{e3d}{a}\Big)^{3d|C|}.
\end{equation*}

We now check the hypotheses of Theorem~\ref{thm:ClusterExp_Convergence} for \(g(C)=|C|\). Remember that \(\weight(C)=0\) is \(C\) is not connected. Then, the number of connected \(C\) containing a given point with \(|C|=K\) is less than or equal to \(e^{c_dK}\) (see \cite[Section 4.2]{Grimmett-1999}) for some \(c_d\geq 0\) depending on \(d\). Now, for a fixed connected set of sites \(C'\),

\begin{equation*}
	\sum_{C \textnormal{ connected}} e^{|C|} |\weight(C)| \mathds{1}_{C\cap C'\neq \varnothing} \leq |C'|\sum_{ C\ni 0 \textnormal{ connected}} e^{|C|} |\weight(C)|
\end{equation*}
by translation invariance of \(\bbL_L\) and \(\weight\). The wanted inequality will be verified if we can show that \(|\weight(C)|\leq 2\Big(\frac{e^{-(c_d +1)}}{4}\Big)^{|C|} \equiv 2e^{-c' |C|}\). Choosing \(a=\frac{\Cr{expDecKUPD_L}}{4\Cr{SetVolCostKUPD_L}}\) and \(L=\max(L_0,\frac{4(2d+1)\log(2)}{\Cr{expDecKUPD_L}},\frac{4 c'}{\Cr{expDecKUPD_L}} )\) where \(L_0\) is given by Lemma~\ref{lem:ExpDecKilledUpdate}, we get that the first term in~\eqref{eq:weightBound_1} is less than \(e^{-c'|C|}\). Choosing then
\begin{equation*}
	\epsilon\leq \exp{-\frac{1}{a}\big( c' + a6^dL^d + 3d\log(e3d/a) \big)},
\end{equation*}
one gets that the second term in~\eqref{eq:weightBound_1} is less than \(e^{-c'|C|}\), implying the wanted bound. We thus have that \(\log(F_N(z))\) is analytic on \(\{|z|<\epsilon\}\) (see for example \cite[Theorem 5.8]{Friedli+Velenik-2017}). We thus have that \(\frac{1}{|\torusd_N|}\log(F_N(z))\) is a sequence of functions (indexed by \(N\)) that are all analytic on \(\{|z|<\epsilon\}\). Moreover, they form a family uniformly bounded on \(\{|z|<\epsilon\}\). By existence of \(\pressure(\beta+z,h)\) for \(z\in \R\), the sequence converge on a set having a cluster point in \(\{|z|<\epsilon\}\). Thus, Vitali Convergence Theorem implies that \(\lim_{N\to \infty} \frac{1}{|\torusd_N|}\log(F_N(z))\) is analytic. So, by~\eqref{eq:trivialEquality}, \(\beta'\mapsto\pressure(\beta',h)\) is analytic in a neighbourhood of \(\beta\).

The analyticity in \(h\) goes the same way (with some simplifications). The same procedure yields analyticity of the pressure in any small enough finite range perturbation of the potential.

\subsection{Analyticity of Multi-Point Functions}

We concentrate on the analyticity in \(h\), the analyticity in \(\beta\) follows in the same fashion. Fix \(A\subset \Zd\) finite. Then, for \(N\) large enough and \(L\) chosen in the same fashion as in the previous Section,
\begin{equation*}
	\lrangle{\sigma_A}_{N,\beta,h+z} = \frac{\lrangle{\prod_{v\in\bbL_L}e^{z M_{v}}f_v^A}_{N,\beta,h}}{\lrangle{\prod_{v\in\bbL_L}e^{z M_{B_L(v)}}}_{N,\beta,h}},
\end{equation*}
where \(M_v=\sum_{i\in B_L(v)}\sigma_i\) and \(f_v^A=\prod_{i\in B_L(v)\cap A}\sigma_i\). Define the weights
\begin{gather*}
	\weight(C) = \sum_{H \subset C} \Phi\Big( \prod_{v\in H}(e^{z M_v}-1) \mathds{1}_{A( C,H )} \Big)\\
	\weight^A(C) = \sum_{H \subset C} \Phi\Big( \prod_{v\in H}(e^{z M_v}f_v^A-1) \mathds{1}_{A( C,H )} \Big).
\end{gather*} We have \(\weight(C)=\weight^A(C)\) if \(A\cap C=\varnothing\). Using the cluster expansion as in the previous Section, one obtains:
\begin{multline*}
	\lrangle{\sigma_A}_{N,\beta,h+z} =\\ = \exp(\sum_{n\geq 1} \sum_{C_1}...\sum_{C_n} \Ursell(C_1,...,C_n) \big(\prod_{i=1}^n \weight^A(C_i)-\prod_{i=1}^n \weight(C_i)\big)),
\end{multline*}for all \(|z|\leq \epsilon\) with \(\epsilon\) such that we have convergence of the cluster expansion. The sum in the exponential being absolutely convergent, the LHS is analytic in \(z\) in a neighbourhood of \(0\) which is uniform over \(N\). Convergence of the sequence \(a_N=\lrangle{\sigma_A}_{N,\beta,h+z}\) for \(z\in \R\) implies the result as previously.

\begin{remark}
	One can alternatively use the fact that we have analyticity of the pressure in any local perturbation of the potential to obtain analyticity of the \(\lrangle{\sigma_A}_{\beta,h}\) as they are the derivative of the pressure of a locally perturbed model. The above way has the advantage of giving an ``explicit'' expression of \(\lrangle{\sigma_A}_{\beta+w,h+z}\).
\end{remark}

\section{Exponential Relaxation of the Magnetization}
\label{sec:ExpRelaxMagnet}
For this whole section, \(\beta\geq 0\) and \(h>0\) will be fixed and dropped from the notation. \(\mu_{N}^*\) and \(\lrangle{\ }_{N}^*\) will thus denote the law and expectation of the Ising model on \(\Lambda_N=[-N,N]^d\) with boundary condition \(*\). The goal of this section is the proof of
\begin{theorem}
	\label{thm:epxRelax_Magnet}
	There exist \(C\geq 0, \nu >0\) such that
	\begin{equation*}
		|\lrangle{\sigma_0}_{N}^{+} -\lrangle{\sigma_0}_{N}^{-}| \leq Ce^{-\nu N}.
	\end{equation*}
\end{theorem}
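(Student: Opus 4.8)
The plan is to use the random cluster (FK) representation of the Ising model at $h>0$, which is the classical route to exponential relaxation of the magnetization in the off-transition regime. The key point is that a positive magnetic field can be encoded by adding a ``ghost vertex'' $\mathfrak g$ connected to every site of $\Lambda_N$ by an edge of weight $p_h = 1-e^{-2h}$, while nearest-neighbour edges carry weight $p_\beta = 1-e^{-2\beta}$. In this representation, with $q=2$,
\begin{equation*}
	\lrangle{\sigma_0}_N^{+} - \lrangle{\sigma_0}_N^{-} = \FKLaw_{N,+}^{\wired}\big(0\leftrightarrow \partial\Lambda_N \mid \text{via the wired boundary, not via } \mathfrak g\big)
\end{equation*}
or, more to the point, the difference of the two magnetizations is controlled by the probability, under the wired-boundary FK measure on $\Lambda_N$ \emph{with the ghost vertex present}, that $0$ is connected to $\partial\Lambda_N$ by an open path that does \emph{not} pass through $\mathfrak g$. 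Indeed the $+$ and $-$ boundary conditions for the spin model correspond, after the Edwards–Sokal coupling, to wiring $\partial\Lambda_N$ to $\mathfrak g$ versus the same configuration with the sign of the boundary cluster flipped; the discrepancy is exactly the event that the cluster of the origin reaches the boundary without using the ghost. So the first step is to set up this coupling carefully and reduce the theorem to the estimate
\begin{equation*}
	\FKLaw_{N}^{\wired,h}\big(0\xleftrightarrow{\ \Lambda_N\ }\partial\Lambda_N\big) \leq Ce^{-\nu N},
\end{equation*}
where the superscript indicates that the connection must be realized inside $\Lambda_N$ (edges to $\mathfrak g$ deleted).

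The second step is to prove this connectivity bound. Here the mechanism is that at $h>0$ the ghost vertex acts as a uniform sink: each site is connected directly to $\mathfrak g$ with probability at least $p_h$ independently of everything else (by finite energy / insertion tolerance, or more precisely because conditioning on all other edges the ghost-edge at $v$ is open with probability bounded below by $p_h q^{-1}$ or similar). Consequently, any open path in $\Lambda_N$ of length $k$ that avoids $\mathfrak g$ has, conditionally, each of its vertices forbidden from using its ghost edge; a Peierls-type / path-counting argument then shows that the probability that the cluster of $0$ avoids $\mathfrak g$ yet has diameter $\geq N$ decays exponentially. Concretely one bounds $\FKLaw_N^{\wired,h}(0\xleftrightarrow{\Lambda_N}\partial\Lambda_N)$ by a sum over self-avoiding paths $\gamma$ from $0$ to $\partial\Lambda_N$ of the FK-probability that $\gamma$ is open and none of its vertices connects to $\mathfrak g$; using the FKG inequality in the reverse direction (or a direct comparison of the conditional measure with a subcritical percolation obtained by deleting ghost edges and noting the remaining model is dominated by Bernoulle$(p_\beta)$ bond percolation with an extra per-vertex killing factor) gives each such term a bound $(C(2d-1))^{|\gamma|}\lambda^{|\gamma|}$ with $\lambda<1$ once $h>0$, whence summing over $|\gamma|\geq N$ yields the claimed $Ce^{-\nu N}$.

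The main obstacle I expect is the first step: making the Edwards–Sokal/ghost-vertex dictionary between $\lrangle{\sigma_0}_N^{\pm}$ and a single FK connectivity event fully rigorous, in particular identifying precisely which FK event equals the difference of the two magnetizations and checking the monotone coupling of the $+$ and $-$ measures goes through with the ghost vertex present. Once that identity is in hand, the connectivity estimate is comparatively routine: the positive field gives a strictly subcritical effective percolation uniformly in $N$, so exponential decay of crossing probabilities is immediate from a union bound over paths. (One should also note that this argument is genuinely independent of the rest of the paper — it does not use weak mixing, it \emph{produces} the input, via Theorem~\ref{thm:Ising_weakMixing}, that feeds the analyticity proof.) An alternative to the path-counting bound, if one prefers not to invoke insertion tolerance, is to use the known exponential decay of the two-point function of the $q=2$ random cluster model with a magnetic field, which holds for all $h\neq 0$ and $\beta\geq 0$ as a consequence of the sharpness results of \cite{Aizenman+Barsky+Fernandez-1987} combined with the ghost-vertex trick; but the self-contained Peierls argument sketched above is preferable here.
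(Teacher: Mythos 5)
Your reduction in the first step is not an identity, and more importantly it hides where the actual difficulty sits. With the ghost vertex $\mathfrak g$ frozen to $+$, the $+$ boundary condition corresponds to the FK measure in which $\partial\Lambda_N$ and $\mathfrak g$ are wired together, whereas the $-$ boundary condition corresponds to the FK measure \emph{conditioned} on $\{\partial\Lambda_N \nleftrightarrow \mathfrak g\}$ (in the representation used in the paper, conditioned on $\{\sigma_\partial=-1\}$, i.e.\ tilted by $(e^{2h|C_\partial|}+1)^{-1}$). That conditioning event has probability of order $e^{-c(\beta,h)N^{d-1}}$, so the difference $\lrangle{\sigma_0}_N^{+}-\lrangle{\sigma_0}_N^{-}$ compares two genuinely different measures, and no finite-energy, FKG or union-bound estimate can be transported to the conditioned measure by simply ``paying'' for the conditioning. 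This conditioned measure is exactly where the work is: the $+$/free comparison is the easy half (Lemma~\ref{lem:epxRelax_MagnetPlusToFree}: interpolate the boundary couplings and use FKG together with the Lebowitz--Penrose exponential decay of truncated correlations at $h\neq0$ --- this is also all that your ``alternative'' via decay of the two-point function can give), while the free/$-$ comparison (Lemma~\ref{lem:epxRelax_MagnetMinusToFree}) is the substance of the theorem, and your proposal does not engage with it.

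The second step, which you call comparatively routine, also fails as stated. Under the wired FK measure with ghost, the event that $0$ is joined to $\partial\Lambda_N$ by a path inside $\Lambda_N$ is \emph{not} exponentially unlikely when $\beta$ is large (the marginal on the $\Lambda_N$-edges dominates a supercritical FK measure), so the relevant event must be that the entire cluster of $0$ avoids $\mathfrak g$. For that event your path-counting bound yields, per self-avoiding path of length $k$, at best a factor $e^{-chk}$ from closing the ghost edges along the path (the factor $p_\beta$ for opening the path is useless at low temperature, where $p_\beta$ is close to $1$), and $e^{-ch}$ does not beat the connective-constant entropy $(2d-1)$ once $h$ is small: the union bound diverges precisely in the regime ($\beta$ large, $h$ small) that the theorem must cover. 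One has to exploit that a cluster reaching distance $N$ at low temperature is typically thick, which forces a coarse-graining or multi-scale argument. This is what the paper's proof does: inside Lemma~\ref{lem:epxRelax_MagnetMinusToFree}, a sequence of events $A_i$ bounds the number of disjoint open crossings of dyadic annuli by $N^{d-i}$, and a many-to-one surgery (closing a cut-set of at most $N^{d-i}$ edges) produces volume gains $e^{-chN^{d-i+1}}$ that beat both the combinatorial cost of the surgery and, crucially, the a priori cost $\bfP(C_\partial\text{ free})\geq e^{-2h}e^{-c\beta N^{d-1}}$ of the conditioning on the minus boundary cluster. Without an argument of this type, both your reduction and your Peierls estimate leave the genuinely hard regime untreated.
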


The first step is the proof of
\begin{lemma}
	\label{lem:epxRelax_MagnetPlusToFree}
	There exist \(C\geq 0, \nu >0\) such that
	\begin{equation*}
		|\lrangle{\sigma_0}_{N}^{+} -\lrangle{\sigma_0}_{N}^{0}| \leq Ce^{-\nu N}.
	\end{equation*}
\end{lemma}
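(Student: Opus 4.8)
The statement to prove is Lemma~\ref{lem:epxRelax_MagnetPlusToFree}: that $|\lrangle{\sigma_0}_{N}^{+}-\lrangle{\sigma_0}_{N}^{0}|\leq Ce^{-\nu N}$, where the superscripts $+$ and $0$ denote plus and free boundary conditions on $\Lambda_N=[-N,N]^d$, at fixed $\beta\geq0$, $h>0$. The natural tool here is the random cluster (FK) representation of the Ising model coupled with the Edwards--Sokal coupling, which is the representation the abstract promises we use for weak mixing. The key point is that $h>0$ acts, in the FK picture, like wiring every site to an external ``ghost'' vertex $\mathfrak g$ with an edge of parameter $p_h=1-e^{-2h}$; the plus boundary condition additionally wires all of $\partial\Lambda_N$ to $\mathfrak g$ (equivalently to each other). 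Under the Edwards--Sokal coupling, $\lrangle{\sigma_0}$ equals the FK-probability that $0$ is connected to $\mathfrak g$ (with the appropriate boundary wiring). Hence
\begin{equation*}
\lrangle{\sigma_0}_{N}^{+}-\lrangle{\sigma_0}_{N}^{0}=\FKLaw_{N}^{+}(0\leftrightarrow\mathfrak g)-\FKLaw_{N}^{0}(0\leftrightarrow\mathfrak g),
\end{equation*}
and both sides are differences of the same increasing event under two FK measures ordered by FKG/monotonicity in boundary conditions, so the difference is nonnegative.

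The strategy is then a standard ``sprinkling / finite-energy'' or exploration argument to bound this difference. First I would use monotonicity to reduce $\FKLaw_N^0$ to $\FKLaw_N^+$ conditioned on a ``firewall'' event: explore the cluster of $\partial\Lambda_N$ in the $+$ measure from outside in, and observe that on the event that this cluster does not reach the box $B_{N/2}$ (say), the conditional law inside $B_{N/2}$ dominates the free measure there. Concretely, $\FKLaw_N^+(0\leftrightarrow\mathfrak g)-\FKLaw_N^0(0\leftrightarrow\mathfrak g)$ is controlled by the probability that $0$ is connected to $\partial\Lambda_N$ (through sites, or to $\mathfrak g$ via a path using the boundary wiring but not via the ghost edges directly) — i.e. by the probability of a long open connection from $0$ to distance $N$ in an FK measure at $h>0$. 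The decisive input is that at $h>0$ the bulk FK measure has exponential decay of connectivities: because the ghost vertex makes $\FKLaw_{\beta,h}$ for the Ising model (which is FK with $q=2$) lie strictly below its critical point in the relevant sense — every site has a uniformly positive chance of connecting to $\mathfrak g$, which ``kills'' long cluster, a statement one can get from the comparison with Bernoulli percolation via finite energy, or from the fact that $\lrangle{\sigma_0\sigma_x}-\lrangle{\sigma_0}\lrangle{\sigma_x}$ decays exponentially at $h\neq 0$ (Lebowitz--Penrose / a Lee--Yang argument, or simply: the $h$-field is a ``mass term''). So one concludes $\FKLaw_N^\#(0\leftrightarrow\partial\Lambda_N)\leq Ce^{-\nu N}$ uniformly in the boundary condition $\#$, which gives the lemma.

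\textbf{Key steps, in order.} (1) Set up the Edwards--Sokal coupling with a ghost vertex for $h$, translating $\lrangle{\sigma_0}^\#$ into $\FKLaw^\#(0\leftrightarrow\mathfrak g)$ and recording the monotonicity $\FKLaw_N^0\preceq\FKLaw_N^+$. (2) Via an exploration of the boundary cluster in $\FKLaw_N^+$, bound the difference of the two connection probabilities by $\FKLaw_N^+(0\leftrightarrow\partial\Lambda_N\text{ off the ghost edge})$, or more cleanly by $\FKLaw_N^{+,\text{bulk}}(0\leftrightarrow\partial B_{N})$ where the bulk measure ignores boundary wiring. (3) Prove exponential decay of this bulk connectivity at $h>0$, uniformly in $N$: use finite energy to stochastically dominate the FK configuration below Bernoulli$(p^*)$ with $p^*<p_c$, the point being that the ghost edges at every vertex reduce the effective occupation probability of lattice edges below criticality when $h>0$; alternatively cite exponential decay of the truncated two-point function at $h\neq 0$. (4) Combine. \textbf{Main obstacle.} The delicate step is (3) together with making (2) rigorous: showing that the per-site ghost connection really does push the model strictly subcritical \emph{uniformly in $\beta$} — for large $\beta$ the lattice FK edges are nearly deterministic, so one cannot naively compare to subcritical Bernoulli percolation. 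The honest route is to invoke exponential decay of connectivities in the FK$(2)$ measure with the ghost at $h>0$, which is equivalent to exponential decay of the truncated correlation $\lrangle{\sigma_0;\sigma_x}_{\beta,h}$ — a known fact for $h\neq 0$ — and then a sprinkling argument to transfer this bulk statement to the comparison of $+$ and free boundary conditions; marshalling the right monotonicity at the ghost vertex (the boundary wiring touches $\mathfrak g$ too) so that the exploration argument closes is where the care is needed.
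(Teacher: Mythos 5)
There is a genuine gap, and it sits exactly where you flag ``care is needed'': step (2), the reduction of \(\lrangle{\sigma_0}_{N}^{+}-\lrangle{\sigma_0}_{N}^{0}\) to a long-connection probability. In the ghost representation the plus measure wires \(\partial\Lambda_N\) to the ghost \(\mathfrak g\), so the cluster of the boundary \emph{contains} \(\mathfrak g\), and \(\mathfrak g\) is attached to each site with conditional probability at least \((e^{2\beta}-1)/(e^{2\beta}+1)\cdot(1-e^{-2h})>0\)-type lower bounds; hence the ``firewall'' event that this cluster avoids \(B_{N/2}\) has probability going to \(0\) (stretched/exponentially in volume), not to \(1\), and the exploration as you describe it does not close. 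If instead you explore only the \emph{lattice-edge} cluster \(S\) of \(\partial\Lambda_N\) (revealing also the ghost edges at \(S\)), then indeed the conditional law on \(\Lambda_N\setminus S\) is the free ghost-FK measure there under \emph{both} boundary conditions, so the conditional probability of \(\{0\leftrightarrow\mathfrak g\}\) is \(\lrangle{\sigma_0}^{0}_{\Lambda_N\setminus S}\); but this quantity depends on \(S\), and the law of \(S\) differs between the two measures. What this argument actually yields is a bound of the form \(\lrangle{\sigma_0}_{N}^{+}-\lrangle{\sigma_0}_{N}^{0}\leq\big(\lrangle{\sigma_0}^{0}_{\Lambda_N}-\lrangle{\sigma_0}^{0}_{\Lambda_{N/2}}\big)+2\max_{\#}\FKLaw^{\#}\big(\partial\Lambda_N\leftrightarrow\Lambda_{N/2}\ \text{off the ghost}\big)\), and the first term is again a plus/free-type discrepancy at scale \(N/2\) (it is bounded by \(\lrangle{\sigma_0}^{+}_{N/2}-\lrangle{\sigma_0}^{0}_{N/2}\)), so the scheme is circular unless an independent quantitative input is injected. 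This is not an accident: the paper uses precisely this boundary-cluster/crossing structure for the \emph{harder} comparison, Lemma~\ref{lem:epxRelax_MagnetMinusToFree}, and there it needs the present lemma as the external input that kills the residual term.

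The input you would need anyway is the exponential decay of truncated correlations at \(h\neq 0\) (Lebowitz--Penrose), which you correctly identify in step (3) (your first suggestion, Bernoulli domination via finite energy, indeed fails at large \(\beta\); the repair is the inequality \(\lrangle{\sigma_0;\sigma_x}\geq\FKLaw(0\leftrightarrow x,\,0\nleftrightarrow\mathfrak g)\), so ghost-avoiding connectivities do decay). But once you grant that input, the paper's proof shows the FK machinery is unnecessary for this lemma: interpolate the coupling constants \(s\in[0,1]\) on the boundary edges (with the exterior spins fixed to \(+1\)), write
\begin{equation*}
0\leq\lrangle{\sigma_0}_{N}^{+}-\lrangle{\sigma_0}_{N}^{0}=\beta\sum_{x\in\partial^{\interior}\Lambda_N}\int_0^1\lrangle{\sigma_0;\sigma_x}_{N}^{s}\,ds\leq\beta\,|\partial^{\interior}\Lambda_N|\,e^{-cN},
\end{equation*}
where the positivity is FKG and the last step is Lebowitz--Penrose; the polynomial boundary factor is harmless. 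So your approach is not wrong in spirit, but as written it is missing the idea that actually converts the boundary-condition difference into the decaying quantity, and the direct interpolation does this in one line.
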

\begin{proof}
	Denote \(\lrangle{\sigma_0}_{N}^{s}\) the law of the Ising model with \(+\) boundary conditions and coupling constants \(s\) on the edges \(\big\{ \{i,j\}:\ i\in \Lambda_N, j\in\Zd\setminus\Lambda_N, i\sim j \big\} \). We thus have
	\begin{multline*}
		0\leq \lrangle{\sigma_0}_{N}^{+} -\lrangle{\sigma_0}_{h}^{0} = \int_{0}^{1} \frac{d}{ds}\lrangle{\sigma_0}_{N}^{s} ds\\
		= \beta\sum_{x\in \partial^{\interior} \Lambda_N }\int_{0}^{1} \lrangle{\sigma_0;\sigma_x}_{N}^{s} ds
		\leq \beta\sum_{x\in \partial^{\interior} \Lambda_N } e^{-cN},
	\end{multline*}
	where the inequality in the first line is FKG and the last inequality is the exponential decay property of the Ising model with a field (see \cite{Lebowitz+Penrose-1968}). \(\partial^{\interior} \Lambda_N\) denotes the set of sites in \(\Lambda_N\) sharing an edge with a site in \(\Zd\setminus \Lambda_N\).
\end{proof}

The next step is the more complicated one. The proof relies on the random cluster representation of the Ising model with a field that we now present. Let \(G=(V,E)\) be a finite graph. Consider the measure on the subsets of \(E\) given by:
\begin{equation*}
	\p_{G}(\omega) \propto (e^{2\beta} -1)^{\omega} \prod_{C\in \clusterSet(\omega)} (e^{h|C|}+e^{-h|C|}),
\end{equation*}
where \(\clusterSet(\omega)\) is the set of connected components of \(\omega\) and \(|C|\) is the number of sites in \(C\). This measure has the properties:
\begin{itemize}
	\item FKG inequality for the canonical order on the subsets of \(E\): for any \(f,g\) both non-decreasing functions,
	\begin{equation*}
		\e_G[fg] \geq \e_G[f]\e_G[g].
	\end{equation*}
	\item Finite energy:
	\begin{equation*}
		\frac{e^{2\beta}-1}{e^{2\beta}+1}\leq \sup_{e\in E} \sup_{\eta\subset (E\setminus \{e\})}\p_G(\omega_e=1\given \omega_f=\eta_f\ \forall f\neq e) \leq 1-e^{-2\beta}.
	\end{equation*}
	\item Closed edges are decoupling: for \(F\subset E\) a cut-set separating \(G\) in \(G_1\) and \(G_2\), \(f\) supported on \(G_1\) and \(g\) supported on \(G_2\),
	\begin{equation*}
		\e_G[fg\given \omega_e=0\ \forall e\in F ] = \e_{G_1}[f] \e_{G_2}[g].
	\end{equation*}
\end{itemize}
The link with the Ising model is the following one: sample \(\omega\sim \p_G\) and independently colour each cluster \(C\) of \(\omega\) \(+1\) with probability \(\frac{e^{h|C|}}{e^{h|C|}+e^{-h|C|}}\) and \(-1\) with probability \(\frac{e^{-h|C|}}{e^{h|C|}+e^{-h|C|}}\). Denote \(\sigma\in\{-1,+1\}^{V}\) the obtained configuration. Then, \(\sigma\sim \mu_{G,\beta,h}\). Denote \(P_G\) the law and expectation of the pair \((\omega,\sigma)\).
To take into account the boundary conditions, we use a modified graph. Take \(\Lambda\) a finite subgraph of \(\Zd\) and define \(\partial \Lambda\) the set of sites in \(\Zd\setminus \Lambda\) sharing an edge with a site in \(\Lambda\). Define then \(\Lambda_{\partial}\) to be the graph with vertex set \(\Lambda\cup \{\partial\}\) and edge set \(E\cup \bigcup_{j\in \partial \Lambda}\big\{ \{\partial, i\}:\ i\sim j\big\}\equiv E\cup E_{\partial}\). We obtain the Ising measure with \(-\) boundary condition via:
\begin{equation*}
	\lrangle{f}^{-}_{\Lambda} = P_{\Lambda_{\partial}}( f(\sigma) \given \sigma_{\partial} = -1).
\end{equation*}
In particular, if \(\Lambda=\Lambda_N\),
\begin{multline}
	\label{eq:magnet_FK_minusBC}
	\lrangle{\sigma_0}^{-}_{\Lambda} = P_{\Lambda_{\partial}}( 0\stackrel{\omega}{\nlongleftrightarrow} \partial,\sigma_0=+1 \given \sigma_{\partial} = -1)-\\-P_{\Lambda_{\partial}}( 0\stackrel{\omega}{\nlongleftrightarrow} \partial,\sigma_0=-1 \given \sigma_{\partial} = -1)-P_{\Lambda_{\partial}}( 0\xleftrightarrow{\omega} \partial \given \sigma_{\partial} = -1).
\end{multline}

Using this, we can now prove
\begin{lemma}
	\label{lem:epxRelax_MagnetMinusToFree}
	There exist \(C\geq 0, \nu >0\) such that
	\begin{equation*}
		|\lrangle{\sigma_0}_{N}^{0} -\lrangle{\sigma_0}_{N}^{-}| \leq Ce^{-\nu N}.
	\end{equation*}
\end{lemma}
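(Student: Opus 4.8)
The plan is to bound $|\lrangle{\sigma_0}_N^0 - \lrangle{\sigma_0}_N^-|$ using the random cluster representation of the $-$ boundary condition given in~\eqref{eq:magnet_FK_minusBC}, and a parallel representation of the free boundary condition obtained by running the same construction on the graph $\Lambda_N$ itself (with no $\partial$ vertex). The key point is that in both representations the quantity $\lrangle{\sigma_0}$ can be written as $P(\sigma_0 = +1) - P(\sigma_0 = -1)$, and the only difference between the free and $-$ cases comes from the event that $0$ is connected (in $\omega$) to the boundary $\partial$: conditionally on $\{0 \stackrel{\omega}{\nlongleftrightarrow} \partial\}$, the cluster of $0$ is colored $+1$ or $-1$ with the same symmetric-in-$h$ weights in both models, so those contributions should match up to the discrepancy caused by the extra edges $E_\partial$ and the conditioning $\sigma_\partial = -1$. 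So the whole difference is controlled by $P_{\Lambda_\partial}(0 \xleftrightarrow{\omega} \partial \given \sigma_\partial = -1)$ plus a term of the same nature coming from comparing the two $\omega$-laws.

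The main step is therefore to prove that $P_{\Lambda_\partial}(0 \xleftrightarrow{\omega} \partial \given \sigma_\partial = -1) \leq C e^{-\nu N}$. First I would remove the conditioning: conditioning on $\sigma_\partial = -1$ reweights $\omega$ by the factor coming from the color of the cluster containing $\partial$ (roughly $\frac{e^{-h|C_\partial|}}{e^{h|C_\partial|}+e^{-h|C_\partial|}}$ where $C_\partial$ is that cluster), and one checks this conditional law is still FKG and still has the finite-energy bounds stated in the excerpt, with constants uniform in $N$. Then $\{0 \xleftrightarrow{\omega} \partial\}$ forces an open $\omega$-path of length at least $N$ from $0$ to $\partial^{\interior}\Lambda_N$; since the conditional measure has exponential decay of connectivities (this is where one invokes the exponential decay of the Ising model with a nonzero field, via~\cite{Lebowitz+Penrose-1968} and~\cite{Aizenman+Barsky+Fernandez-1987}, transported to the random cluster side through the coupling and the finite-energy/FKG comparison), a union bound over the $O(N^{d-1})$ boundary sites gives the $Ce^{-\nu N}$ bound.

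To make the matching of the ``$0$ not connected to $\partial$'' parts rigorous, I would couple the random cluster configuration on $\Lambda_N$ (free) with the one on $\Lambda_\partial$ conditioned on $\sigma_\partial = -1$: reveal the edges of $\Lambda_N$ first; on the event $\{0 \stackrel{\omega}{\nlongleftrightarrow} \partial\}$ in $\Lambda_\partial$, the connected component of $0$ and its color have exactly the free-graph law conditioned on the same revealed information up to the influence of $E_\partial$, which is felt only through clusters touching $\partial^{\interior}\Lambda_N$; again a union bound over $\partial^{\interior}\Lambda_N$ together with exponential decay of the cluster of $0$ reaching distance $N$ closes the gap. Putting the pieces together: $|\lrangle{\sigma_0}_N^0 - \lrangle{\sigma_0}_N^-|$ is bounded by the probability that the cluster of $0$ has diameter $\geq N$ (in either model), plus $P_{\Lambda_\partial}(0 \xleftrightarrow{\omega}\partial\given\sigma_\partial=-1)$, both of which are $\leq Ce^{-\nu N}$.

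The hard part will be controlling the effect of the conditioning $\sigma_\partial = -1$ on the random cluster measure: one must verify that the reweighted measure retains enough of the FKG and finite-energy structure to inherit the exponential decay of connectivities that holds for the unconditioned Ising-with-field model, uniformly in $N$. Once that stochastic-domination/comparison step is in place, the rest is a routine union bound over the $O(N^{d-1})$ boundary sites combined with the combination of Lemma~\ref{lem:epxRelax_MagnetPlusToFree} and Lemma~\ref{lem:epxRelax_MagnetMinusToFree} to conclude Theorem~\ref{thm:epxRelax_Magnet}.
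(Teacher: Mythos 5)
Your reduction at the start is in the right spirit (it parallels the paper's use of~\eqref{eq:magnet_FK_minusBC}: the difference is controlled by the ``$0$ connected to $\partial$'' contribution plus a comparison of the unconnected parts), but the way you propose to bound the key quantity $P_{\Lambda_{\partial}}\big(0\xleftrightarrow{\omega}\partial\given\sigma_{\partial}=-1\big)$ does not work, and this is precisely where the real content of the lemma lies. You argue that the conditioned random cluster measure ``inherits exponential decay of connectivities'' from the Ising model with a field (via \cite{Lebowitz+Penrose-1968}, \cite{Aizenman+Barsky+Fernandez-1987}). But in the representation used here, with cluster weights $e^{h|C|}+e^{-h|C|}$, connectivity probabilities are \emph{not} related to truncated Ising correlations: for $\beta$ large and $h>0$ the unconditioned measure $\p_{\Lambda_{\partial}}$ percolates, the cluster of $0$ is typically macroscopic, and $\p_{\Lambda_{\partial}}(0\xleftrightarrow{\omega}\partial)$ is close to $1$ even though $\lrangle{\sigma_0;\sigma_x}$ decays exponentially. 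So there is no FKG/finite-energy ``transport'' of the Lebowitz--Penrose decay to the random cluster side, and the same objection kills your coupling step for the unconnected parts, which relies on the cluster of $0$ having diameter $<N$ with overwhelming probability. The only source of smallness is the conditioning itself: $\{\sigma_{\partial}=-1\}$ reweights $\omega$ by a factor comparable to $e^{-2h|C_{\partial}|}$, and one must play this volume penalty against the entropy of possible boundary clusters. That is the paper's argument: a multi-scale scheme with events $A_i$ (at most $N^{d-i}$ disjoint open crossings of the annulus $\Lambda_{N/2^{i-1}}\setminus\Lambda_{N/2^i}$) and many-to-one surgery maps $T_i$ closing cut-sets of at most $N^{d-i}$ edges, where the entropy cost $e^{c_i\log(N)N^{d-i}}$ is beaten by the gain $e^{-2h2^{-i}N^{d-i+1}}$ in the weight. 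Nothing of this kind appears in your proposal; you in fact flag the effect of the conditioning as ``the hard part'' without supplying an argument, and the FKG claim for the conditioned measure (reweighting an FKG measure by the decreasing function $(e^{2h|C_{\partial}|}+1)^{-1}$) is itself unjustified.

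A second, smaller point: for the ``$0$ not connected to $\partial$'' contributions the paper does not need any decay statement at all. It uses the decoupling property of closed edges to rewrite the contribution on $\{\partial\stackrel{\omega}{\nlongleftrightarrow}\Lambda_{N/K}\}$ as a free-boundary expectation $\lrangle{\sigma_0}^{0}_{\Lambda_{\partial}\setminus C}$, bounds it below by $\lrangle{\sigma_0}^{0}_{N/K}$ via GKS monotonicity, and then controls $\lrangle{\sigma_0}^{0}_{N}-\lrangle{\sigma_0}^{0}_{N/K}$ by FKG together with Lemma~\ref{lem:epxRelax_MagnetPlusToFree}. Replacing your coupling sketch by this soft monotonicity argument would remove the second place where your proof appeals to a false decay property; but the essential missing idea remains the entropy-versus-$e^{-2h|C_{\partial}|}$ surgery bounding $P_{\Lambda_{\partial}}\big(\partial\xleftrightarrow{\omega}\Lambda_{N/K}\given\sigma_{\partial}=-1\big)$.
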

\begin{proof}
	Let \(\Lambda=\Lambda_{N}\). First, by monotonicity of the Ising measure, \(\lrangle{\sigma_0}_{N}^{0} -\lrangle{\sigma_0}_{N}^{-}\geq 0\). Then, by~\eqref{eq:magnet_FK_minusBC},
	\begin{multline*}
		\lrangle{\sigma_0}_{N}^{-} = \e_{\Lambda_{\partial}}\big[ (e^{2h|C_{\partial}|}+1)^{-1}\big]^{-1} \Big(-\e_{\Lambda_{\partial}}\big[ \mathds{1}_{0\xleftrightarrow{\omega}\partial} (e^{2h|C_{\partial}|}+1)^{-1} \big] +\\+ \e_{\Lambda_{\partial}}\Big[\mathds{1}_{0\stackrel{\omega}{\nlongleftrightarrow} \partial} \frac{e^{h|C_0|}- e^{-h|C_0|}}{e^{h|C_0|}+e^{-h|C_0|}} (e^{2h|C_{\partial}|}+1)^{-1}\Big] \Big).
	\end{multline*}
	Defining \(p(\omega) = (e^{2h|C_{\partial}(\omega)|}+1)^{-1}\), we have
	\begin{equation*}
		\e_{\Lambda_{\partial}}\big[ p\big]\lrangle{\sigma_0}_{N}^{-} =  \e_{\Lambda_{\partial}}\Big[\mathds{1}_{0\stackrel{\omega}{\nlongleftrightarrow} \partial} \tanh(h|C_0|) p\Big] -\e_{\Lambda_{\partial}}\big[ \mathds{1}_{0\xleftrightarrow{\omega}\partial} p \big].
	\end{equation*}
	Now, for any \(K\) (dividing \(N\)),
	\begin{equation*}
		\e_{\Lambda_{\partial}}\Big[\mathds{1}_{0\stackrel{\omega}{\nlongleftrightarrow} \partial} \tanh(h|C_0|) p\Big] \geq \e_{\Lambda_{\partial}}\Big[\mathds{1}_{\partial\stackrel{\omega}{\nlongleftrightarrow} \Lambda_{N/K}} \tanh(h|C_0|) p\Big].
	\end{equation*}
	Then, \(\partial\stackrel{\omega}{\nlongleftrightarrow} \Lambda_{N/K}\) implies that the edge boundary of \(C_{\partial}\) (that we will denote \(\partial^{\edge}C_{\partial}\)) is included in \(\Lambda_{\partial}\setminus\Lambda_{N/K}\). Moreover, it is a cut set. The decoupling property of closed edges implies:
	\begin{align*}
		&\e_{\Lambda_{\partial}}\Big[\mathds{1}_{\partial\stackrel{\omega}{\nlongleftrightarrow} \Lambda_{N/K}} \tanh(h|C_0|) p\Big] =\\
		&\quad = \sum_{\substack{C\ni \partial\\ C\cap \Lambda_{N/K}=\varnothing}} (e^{2h|C|}+1)^{-1} \p_{\Lambda_{\partial}}\big(C_{\partial} = C\big) \e_{\Lambda_{\partial}\setminus C}\big[\tanh(h|C_0|)\big]\\
		&\quad = \sum_{\substack{C\ni \partial\\ C\cap \Lambda_{N/K}=\varnothing}} (e^{2h|C|}+1)^{-1} \p_{\Lambda_{\partial}}\big(C_{\partial} = C\big) \lrangle{\sigma_0}_{\Lambda_{\partial}\setminus C}^0.
	\end{align*}
	When writing \(\Lambda_{\partial}\setminus C\) we mean the graph obtained from \(\Lambda_{\partial}\) by removing the vertices of \(C\) (and the associated edges). Now, the monotonicity of the Ising model (in the volume, it is a direct consequence of GKS inequality) and the constraint on \(C\) implies \(\lrangle{\sigma_0}_{\Lambda_{\partial}\setminus C}^0 \geq \lrangle{\sigma_0}_{N/K}^0\). Thus,
	\begin{equation*}
		\e_{\Lambda_{\partial}}\Big[\mathds{1}_{\partial\stackrel{\omega}{\nlongleftrightarrow} \Lambda_{N/K}} \tanh(h|C_0|) p\Big] \geq \lrangle{\sigma_0}_{N/K}^0 \e_{\Lambda_{\partial}}\big[ \mathds{1}_{\partial\stackrel{\omega}{\nlongleftrightarrow} \Lambda_{N/K}} p\big].
	\end{equation*}
	We obtain the upper bound (as \(\{0\xleftrightarrow{\omega} \partial\}\subset \{\partial \xleftrightarrow{\omega} \Lambda_{N/K}\}\) and \(\sigma_0\leq 1\)):
	\begin{equation*}
		-\lrangle{\sigma_0}_{N}^{-} \leq - \lrangle{\sigma_0}_{N/K}^0 + 2 P_{\Lambda_{\partial}}\big( \partial\xleftrightarrow{\omega} \Lambda_{N/K}\given \sigma_{\partial} = -1\big).
	\end{equation*}
	We have,
	\begin{equation*}
		\lrangle{\sigma_0}_{N}^{0} -\lrangle{\sigma_0}_{N}^{-} \leq \lrangle{\sigma_0}_{N}^{0} - \lrangle{\sigma_0}_{N/K}^0 + 2 P_{\Lambda_{\partial}}\big( \partial\xleftrightarrow{\omega} \Lambda_{N/K}\given \sigma_{\partial} = -1\big).
	\end{equation*}
	As,
	\begin{equation*}
		0\leq \lrangle{\sigma_0}_{N}^{0} - \lrangle{\sigma_0}_{N/K}^0 \leq \lrangle{\sigma_0}_{N/K}^{+} - \lrangle{\sigma_0}_{N/K}^0 \leq Ce^{-\frac{\nu}{K} N },
	\end{equation*}
	by FKG and Lemma~\ref{lem:epxRelax_MagnetPlusToFree}, Lemma~\ref{lem:epxRelax_MagnetMinusToFree} will follow from
	\begin{claim}
		There exist \(K\geq 0, \nu>0, N_0\geq 0\) such that for any \(N\geq N_0\) divisible by \(K\),
		\begin{equation*}
			P_{\Lambda_{\partial}}\big( \partial\xleftrightarrow{\omega} \Lambda_{N/K}\given \sigma_{\partial} = -1\big)\leq e^{-\nu N}.
		\end{equation*}
	\end{claim}
	\begin{proof}
		We start by a simple observation. As \(\frac{1}{2}\leq (e^{-2h k}+1)^{-1}\leq 1\), for any event \(A\),
		\begin{equation*}
			\frac{1}{2} \frac{\e_{\Lambda_{\partial}}[e^{-2h|C_{\partial}|}\mathds{1}_{\omega\in A}]}{\e_{\Lambda_{\partial}}[e^{-2h|C_{\partial}|}]} \leq \frac{\e_{\Lambda_{\partial}}[p\mathds{1}_{\omega\in A}]}{\e_{\Lambda_{\partial}}[p]} \leq 2\frac{\e_{\Lambda_{\partial}}[e^{-2h|C_{\partial}|}\mathds{1}_{\omega\in A}]}{\e_{\Lambda_{\partial}}[e^{-2h|C_{\partial}|}]}.
		\end{equation*}
		We can prove the result by proving
		\begin{equation*}
			\bfP_{\Lambda_{\partial}}(\partial\xleftrightarrow{\omega} \Lambda_{N/K}\given C_{\partial} \textnormal{ free})\leq Ce^{-\nu N},
		\end{equation*}where \(\bfP\) is the product law of \(\p_{\Lambda_{\partial}}\) and a i.i.d. family of Bernoulli random variables of parameter \(1-e^{-2h}\) indexed by the vertices of \(\Lambda_{\partial}\) and \(\{C_{\partial} \textnormal{ free}\}\) is the event that all Bernoulli random variables attached to a site in \(C_{\partial}(\omega)\) are \(0\).
		
		For \(i=1,...,d\), define \(A_{i}\) the event that there exist at most \( N^{d-i}\) disjoints open paths belonging to \(C_{\partial}\) and going from \(\partial^{\interior}\Lambda_{N/2^{i-1}}\) to \(\Lambda_{N/2^{i}}\). Notice that when \(A_i\) occurs, there exists a cut-set of edges \(\gamma\) separating \(\partial\) from \(\Lambda_{N/2^{i}}\) containing less than \(N^{d-i}\) open edges. On \(A_i\), we can thus define \(\Gamma_i\) to be the (random) set of edges obtained as follows: take \(E_i\) to be the outermost (closest to \(\partial\)) cut-set of \(C_{\partial}\) such that: \(|E_i|\leq N^{d-i}\) and \(E_i\subset \Lambda_{N/2^{i-1}}\setminus \Lambda_{N/2^{i}}\). Let \(C_i\subset C_{\partial}\) be the component of the cut defined by \(E_i\) containing \(\partial\). Set \(\Gamma_i=\partial^{\edge}C_i\). Denote also \(\Delta_i=\Delta(\Gamma_i)\) the graph obtained from \(\Lambda_{\partial}\) by removing \(\Gamma_i\) and taking the connected component of \(\Lambda_{N/2^{i}}\) and \(V_i\) the set of vertices in \(\Delta_i\) that are endpoint of an edge in \(E_i\). We have the identity:
		\begin{equation*}
			e^{-2h|C_{\partial}(\omega)|} = e^{-2h|C_i(\omega)|}e^{-2h|C_{V_i}(\omega_{\Delta_i})|},
		\end{equation*}
		where \(C_{V_i}\) is the cluster of \(V_i\) and \(\omega_{\Delta_i}\) is the restriction of \(\omega\) to \(\Delta_i\).
		
		Then, define the mapping \(T_i:A_i\to \{\partial^{\interior}\Lambda_{N/2^{i-1}}\nleftrightarrow \Lambda_{N/2^i}\}\) that closes all edges in \(\Gamma_i\).
		
		We now want to prove that there exist \(\nu_i>0, i=1,...,d\) such that (denote \(A_0\) the full space of configurations)
		\begin{equation}
			\label{eq:Ai_expBnd}
			\bfP_{\Lambda_{\partial}}(A_i^c,A_{i-1}, C_{\partial} \textnormal{ free}) \leq e^{-\nu_i N}\bfP_{\Lambda_{\partial}}(A_{i-1}, C_{\partial} \textnormal{ free}).
		\end{equation}
		We start with the \(i=1\) case. One has the a-priori bound:
		\begin{equation*}
			\bfP_{\Lambda_{\partial}}(C_{\partial} \textnormal{ free})\geq e^{-2h}e^{-2\beta 2d(2N+1)^{d-1}},
		\end{equation*}by finite energy (we can close all edges between \(\partial\) and \(\Lambda_N\)). On the other hand, \(A_1^c\) implies that \(|C_{\partial}|\geq N^{d-1}\frac{N}{2}\). Thus, \(\bfP_{\Lambda_{\partial}}(A_1^c, C_{\partial} \textnormal{ free})\leq e^{-h N^{d}}\). This implies that there exists \(\nu_1>0\) such that
		\begin{equation*}
			\bfP_{\Lambda_{\partial}}(A_1^c \given C_{\partial} \textnormal{ free})\leq e^{-\nu_1 N^{d}},
		\end{equation*}whenever \(N\) is large enough. For \(1<i\leq d\), we use the mapping \(T_{i-1}\) to implement a many to one argument.
		\begin{multline*}
			\bfP_{\Lambda_{\partial}}(A_i^c,A_{i-1}, C_{\partial} \textnormal{ free}) \leq\\
			\leq e^{c N^{d-i} } \binom{2d(2N+1)^d}{ N^{d-i}}\bfP_{\Lambda_{\partial}}(A_{i-1}, C_{\partial} \textnormal{ free}) e^{-2h 2^{-i} N^{d-i+1}}
		\end{multline*}
		where we used the many to one principle, finite energy, that to reconstruct \(\omega\) from \(T_{i-1}(\omega)\) one need to specify which edges where closed by \(T_{i-1}\), there are at most \(N^{d-i}\) such edges and they have to belong to \(\Lambda_N\) and that \(A_i^c\) implies that \(|C_{V_{i-1}}(\omega_{\Delta_{i-1}})|\geq  2^{-i}N^{d-i+1}\). Now,
		\begin{equation*}
			\binom{2d(2N+1)^d}{ N^{d-i}} \leq \big(2d e 3^d N^i\big)^{ N^{d-i}} = e^{c_i \log(N) N^{d-i}}.
		\end{equation*} This implies the existence of \(\nu_i>0\) such that, for any \(N\) large enough,
		\begin{equation*}
			\bfP_{\Lambda_{\partial}}(A_i^c,A_{i-1}, C_{\partial} \textnormal{ free}) \leq e^{-\nu_i N^{d-i+1}}\bfP_{\Lambda_{\partial}}(A_{i-1}, C_{\partial} \textnormal{ free}).
		\end{equation*}
		Taking \(N\) large enough so that~\eqref{eq:Ai_expBnd} is satisfied for \(i=1,...,d\), one has:
		\begin{equation*}
			\bfP_{\Lambda_{\partial}}(A_i^c,A_{i-1}\given C_{\partial} \textnormal{ free}) \leq e^{-\nu_i N}.
		\end{equation*}		
		In particular,
		\begin{equation*}
			\bfP_{\Lambda_{\partial}}(A_d^c\given C_{\partial} \textnormal{ free}) \leq \sum_{i=1}^d e^{-\nu_i N}.
		\end{equation*}
		Setting \(K=2^{d+1}\) and doing the same one to many argument as before using \(T_d\) and the fact that a crossing from \(\partial^{\interior}\Lambda_{N/2^d}\) to \(\Lambda_{N/K}\) uses at least \(N/K\) sites, one gets
		\begin{equation*}
			\bfP_{\Lambda_{\partial}}(\partial\xleftrightarrow{\omega} \Lambda_{N/K}, A_d \given C_{\partial} \textnormal{ free})\leq e^{-\nu N},
		\end{equation*} for some \(\nu>0\) whenever \(N\) is large enough. This concludes the proof.
	\end{proof}
\end{proof}

\section{Acknowledgments}
The author thanks Yvan Velenik for various comments on a previous draft of this paper. The author gratefully acknowledge the support of the Swiss National Science Foundation through the NCCR SwissMAP.

\appendix

\section{Cluster Expansion}
\label{app:ClusterExp}

We recall here what is the cluster expansion of a pair interaction polymer model and a result about convergence of this expansion. The whole presentation can be found in \cite{Friedli+Velenik-2017} so we only state the results and refer to \cite[Chapter 5]{Friedli+Velenik-2017} for proofs and more details.

\subsection*{The Framework}
Suppose we are given a set \(\Gamma\) (the set of polymers), a weighting \(\weight:\Gamma\to\mathbb{C}\) and an interaction \(\delta:\Gamma\times\Gamma\to[-1,1] \). The \emph{polymer partition function} is then given by
\begin{equation*}
	Z = \sum_{H \subset \Gamma \text{ finite}} \Big(\prod_{\gamma\in H}\weight(\gamma)\Big) \Big(\prod_{\{\gamma,\gamma'\}\subset H} \delta(\gamma,\gamma')\Big).
\end{equation*}
The empty set contributes \(1\) to the sum. To state the formal equality, we need to define the \emph{Ursell function} of an ordered collection of polymers:
\begin{equation*}
	\Ursell(\gamma_1) = 1,\quad \Ursell(\gamma_1,...,\gamma_n) = \frac{1}{n!} \sum_{\substack{G\subset K_n\\ \textnormal{connected}}} \prod_{\{i,j\}\in E_G} (\delta(\gamma_i,\gamma_j)-1),
\end{equation*}where \(K_{n}\) is the complete graph on \(\{1,...,n\}\), \(G=(\{1,...,n\},E_G)\) is an edge-subgraph of \(K_n\).
\subsection*{The Formal Equality} Equipped with this set-up, we have the equality (valid when the sum in the exponential is absolutely convergent)
\begin{equation}
\label{eq:ClusterExpansion}
	Z = \exp(\sum_{n\geq 1} \sum_{\gamma_1}...\sum_{\gamma_n} \Ursell(\gamma_1,...,\gamma_n) \prod_{i=1}^n \weight(\gamma_i)).
\end{equation}
\subsection*{Convergence}
The result we will use is the following criterion for the absolute convergence of \(\sum_{n\geq 1} \sum_{\gamma_1}...\sum_{\gamma_n} \Ursell(\gamma_1,...,\gamma_n) \prod_{i=1}^n \weight(\gamma_i)\):
\begin{theorem}
\label{thm:ClusterExp_Convergence}
	If there exists \(g:\Gamma\to\R_{>0}\) such that for every \(\gamma'\in\Gamma\)
	\begin{equation*}
		\sum_{\gamma\in\Gamma} e^{g(\gamma)} |\weight(\gamma)| |\delta(\gamma,\gamma')-1|\leq g(\gamma'),
	\end{equation*}
	and such that \(\sum_{\gamma\in\Gamma} e^{g(\gamma)}|\weight(\gamma)|<\infty\) then,
	\begin{equation*}
		\sum_{n\geq 1} \sum_{\gamma_1}...\sum_{\gamma_n} |\Ursell(\gamma_1,...,\gamma_n)| \prod_{i=1}^n |\weight(\gamma_i)|<\infty.
	\end{equation*}
\end{theorem}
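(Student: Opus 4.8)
The plan is to run the classical Penrose--Dobrushin argument for cluster expansions, as presented in \cite[Chapter~5]{Friedli+Velenik-2017}: bound each $|\Ursell|$ by a positive sum over trees, recognize the resulting series as a rooted-tree generating function, and dominate that generating function by $e^{g}$ through a fixed-point comparison.

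\textbf{Step 1: tree-graph inequality.} First I would expand $\Ursell(\gamma_1,\dots,\gamma_n)$ over connected graphs on $[n]$ and regroup the connected graphs according to a Penrose partition scheme, obtaining
\begin{equation*}
	|\Ursell(\gamma_1,\dots,\gamma_n)|\ \leq\ \frac{1}{n!}\sum_{T\in\mathcal{T}_n}\ \prod_{\{i,j\}\in E_T}|\delta(\gamma_i,\gamma_j)-1|,
\end{equation*}
where $\mathcal{T}_n$ is the set of trees on $[n]$; since $|\delta|\leq1$, the modulus of the graph expansion is controlled by that of the associated spanning tree and the triangle inequality over trees closes the bound. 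In the application only the hard-core case $\delta\in\{0,1\}$ is needed, where this step is completely standard.

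\textbf{Step 2: reduction to a rooted-tree series.} Using the symmetry of $\Ursell$ and the factorization of the weights, Fubini gives
\begin{equation*}
	\sum_{n\geq1}\sum_{\gamma_1,\dots,\gamma_n}|\Ursell(\gamma_1,\dots,\gamma_n)|\prod_{i=1}^{n}|\weight(\gamma_i)|\ =\ \sum_{\gamma_1\in\Gamma}|\weight(\gamma_1)|\,\Psi(\gamma_1),
\end{equation*}
with $\Psi(\gamma_1):=\sum_{n\geq1}\sum_{\gamma_2,\dots,\gamma_n}|\Ursell(\gamma_1,\dots,\gamma_n)|\prod_{i=2}^{n}|\weight(\gamma_i)|$. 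Inserting Step~1, rooting every tree at vertex~$1$ (which does not change the count), and using $\tfrac{1}{n!}=\tfrac1n\cdot\tfrac{1}{(n-1)!}\leq\tfrac{1}{(n-1)!}$ together with positivity of all summands, one obtains $\Psi(\gamma_1)\leq\Xi(\gamma_1)$, where
\begin{equation*}
	\Xi(\gamma_1):=\sum_{n\geq1}\frac{1}{(n-1)!}\sum_{\gamma_2,\dots,\gamma_n}\ \sum_{\substack{\tau\ \text{tree on}\ [n]\\ \text{rooted at}\ 1}}\ \prod_{\{i,j\}\in E_\tau}|\delta(\gamma_i,\gamma_j)-1|\ \prod_{i=2}^{n}|\weight(\gamma_i)|.
\end{equation*}
Since $\sum_{\gamma}e^{g(\gamma)}|\weight(\gamma)|<\infty$ by hypothesis, it then suffices to prove $\Xi(\gamma_1)\leq e^{g(\gamma_1)}$ for every $\gamma_1\in\Gamma$.

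\textbf{Step 3: fixed-point comparison.} Decomposing a tree rooted at vertex~$1$ into its root and the unordered family of rooted subtrees hanging from the children of the root, the standard exponential generating function identity for rooted labelled trees yields, for $\Xi_k$ defined as the same series restricted to rooted trees of height $\leq k$,
\begin{equation*}
	\Xi_0\equiv1,\qquad \Xi_{k+1}(\gamma)=\exp(\sum_{\gamma'\in\Gamma}|\weight(\gamma')|\,|\delta(\gamma,\gamma')-1|\,\Xi_k(\gamma')),
\end{equation*}
and $\Xi_k\uparrow\Xi$ (in particular $\Xi_1(\gamma)\leq e^{g(\gamma)}<\infty$, so no a priori finiteness issue arises). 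The map $X\mapsto\exp(\sum_{\gamma'\in\Gamma}|\weight(\gamma')|\,|\delta(\gamma,\gamma')-1|\,X(\gamma'))$ is monotone in $X$, and the first hypothesis on $g$ says precisely that $X_0(\gamma):=e^{g(\gamma)}$ is a super-solution:
\begin{equation*}
	\exp(\sum_{\gamma'\in\Gamma}|\weight(\gamma')|\,|\delta(\gamma,\gamma')-1|\,e^{g(\gamma')})\ \leq\ e^{g(\gamma)}.
\end{equation*}
Since $\Xi_0\equiv1\leq X_0$, an immediate induction gives $\Xi_k\leq X_0$ for all $k$, hence $\Xi\leq e^{g}$, and therefore
\begin{equation*}
	\sum_{n\geq1}\sum_{\gamma_1,\dots,\gamma_n}|\Ursell(\gamma_1,\dots,\gamma_n)|\prod_{i=1}^{n}|\weight(\gamma_i)|\ \leq\ \sum_{\gamma\in\Gamma}|\weight(\gamma)|\,e^{g(\gamma)}\ <\ \infty.
\end{equation*}
The analytic input is used transparently; the two genuinely delicate points are combinatorial bookkeeping, namely establishing the Penrose tree-graph inequality of Step~1 and checking that the root-plus-subtrees decomposition of Step~3 produces exactly the stated exponential recursion with the $1/(n-1)!$ normalization (the identity $\hat R=e^{x\hat R}$ for the generating function of rooted labelled trees). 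Both being classical, I would in fact simply refer to \cite[Chapter~5]{Friedli+Velenik-2017} for the details.
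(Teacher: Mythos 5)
The paper contains no proof of this appendix theorem: it explicitly defers to \cite[Chapter 5]{Friedli+Velenik-2017}, so there is no in-paper argument to compare against, and your sketch — tree-graph (Penrose) bound on \(\Ursell\), reduction to a rooted-tree series, and comparison of the monotone height-\(k\) recursion with the super-solution \(e^{g}\) — is the standard proof from exactly that circle of references; it is correct. Two minor points are worth making explicit. First, the tree-graph inequality of your Step 1 requires \(|1+(\delta-1)|=|\delta|\leq 1\) on every pair, which is precisely what the paper's assumption \(\delta\colon\Gamma\times\Gamma\to[-1,1]\) provides, so the bound holds in the stated generality and not only in the hard-core case \(\delta\in\{0,1\}\) used in the application (where \(\delta(C,C')=\mathds{1}_{C\cap C'=\varnothing}\)). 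Second, in Step 3 you apply the hypothesis with the sum running over the second argument of \(\delta\), whereas the theorem states it with the sum over the first; this silently uses the symmetry of \(\delta\), which is harmless since the Ursell function only involves unordered pairs (so only the symmetrized interaction matters), but deserves a word. Deferring the combinatorial bookkeeping of the rooted-tree recursion to \cite[Chapter 5]{Friedli+Velenik-2017} is legitimate — that is what the paper itself does for the entire statement.
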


\bibliographystyle{plain}
\bibliography{PressureAnalytic}

\begin{thebibliography}{10}

\bibitem{Aizenman+Barsky+Fernandez-1987}
M.~Aizenman, D.~J. Barsky, and R.~Fern\'andez.
\newblock The phase transition in a general class of {I}sing-type models is
  sharp.
\newblock {\em J. Statist. Phys.}, 47(3-4):343--374, 1987.

\bibitem{Alexander-1998}
K.~S. Alexander.
\newblock On weak mixing in lattice models.
\newblock {\em Probability Theory and Related Fields}, 110(4):441--471, May
  1998.

\bibitem{Basuev-2007}
A.~G. Basuev.
\newblock Ising model in half-space: {A} series of phase transitions in low
  magnetic fields.
\newblock {\em Theoretical and Mathematical Physics}, 153(2):1539--1574,
  November 2007.

\bibitem{Cimasoni+Duminil-Copin-2013}
D.~Cimasoni and H.~Duminil-Copin.
\newblock The critical temperature for the {Ising} model on planar doubly
  periodic graphs.
\newblock {\em Electronic Journal of Probability}, 18, 2013.

\bibitem{Dobrushin+Shlosman-1987}
R.~L. Dobrushin and S.~B. Shlosman.
\newblock Completely analytical interactions: {Constructive} description.
\newblock {\em Journal of Statistical Physics}, 46(5):983--1014, March 1987.

\bibitem{Duminil-Copin+Goswami+Raoufi-2018}
H.~Duminil-Copin, S.~Goswami, and A.~Raoufi.
\newblock Exponential decay of truncated correlations for the {Ising} model in
  any dimension for all but the critical temperature.
\newblock August 2018.
\newblock arXiv:1808.00439.

\bibitem{Duminil-Copin+Tassion-2016}
H.~Duminil-Copin and V.~Tassion.
\newblock A {New} {Proof} of the {Sharpness} of the {Phase} {Transition} for
  {Bernoulli} {Percolation} and the {Ising} {Model}.
\newblock {\em Communications in Mathematical Physics}, 343(2):725--745, April
  2016.

\bibitem{Friedli+Velenik-2017}
S.~Friedli and Y.~Velenik.
\newblock {\em Statistical Mechanics of Lattice Systems: A Concrete
  Mathematical Introduction}.
\newblock Cambridge University Press, 2017.

\bibitem{Gallavotti+Miracle-Sole+Robinson-1968}
G.~Gallavotti, S.~Miracle-Sole, and Derek~W. Robinson.
\newblock Analyticity properties of the anisotropic {Heisenberg} model.
\newblock {\em Communications in Mathematical Physics}, 10(4):311--324, 1968.

\bibitem{Grimmett-1999}
G.~Grimmett.
\newblock {\em Percolation}.
\newblock Springer, Berlin Heidelberg, 1999.

\bibitem{Kasteleyn-1963}
P.~W. Kasteleyn.
\newblock Dimer {Statistics} and {Phase} {Transitions}.
\newblock {\em Journal of Mathematical Physics}, 4(2):287--293, February 1963.

\bibitem{Lebowitz-1972}
J.~L. Lebowitz.
\newblock Bounds on the correlations and analyticity properties of
  ferromagnetic ising spin systems.
\newblock {\em Comm. Math. Phys.}, 28(4):313--321, 1972.

\bibitem{Lebowitz+Penrose-1968}
J.~L. Lebowitz and O.~Penrose.
\newblock Analytic and clustering properties of thermodynamic functions and
  distribution functions for classical lattice and continuum systems.
\newblock {\em Comm. Math. Phys.}, 11:99--124, 1968/1969.

\bibitem{Lee+Yang-1952}
T.~D. Lee and C.~N. Yang.
\newblock Statistical theory of equations of state and phase transitions. ii.
  lattice gas and ising model.
\newblock {\em Phys. Rev.}, 87:410--419, Aug 1952.

\bibitem{Liggett+Schonmann+Stacey-1997}
T.~M. Liggett, R.~H. Schonmann, and A.~M. Stacey.
\newblock Domination by product measures.
\newblock {\em The Annals of Probability}, 25(1):71--95, January 1997.

\bibitem{Lubetzky+Sly-2015}
E.~Lubetzky and A.~Sly.
\newblock Information percolation and cutoff for the stochastic {Ising} model.
\newblock {\em Journal of the American Mathematical Society}, 29(3):729--774,
  September 2015.

\bibitem{Martin-Lof-1973}
A.~Martin-Löf.
\newblock Mixing properties, differentiability of the free energy and the
  central limit theorem for a pure phase in the {Ising} model at low
  temperature.
\newblock {\em Communications in Mathematical Physics}, 32(1):75--92, 1973.

\bibitem{Martinelli-1999}
F.~Martinelli.
\newblock {\em Lectures on Glauber Dynamics for Discrete Spin Models}.
\newblock Springer Berlin Heidelberg, Berlin, Heidelberg, 1999.

\bibitem{Martinelli+Olivieri-1994}
F.~Martinelli and E.~Olivieri.
\newblock Approach to equilibrium of {Glauber} dynamics in the one phase
  region. {I}. {The} attractive case.
\newblock {\em Communications in Mathematical Physics}, 161(3):447--486, 1994.

\bibitem{Martinelli+Olivieri+Schonmann-1994}
F.~Martinelli, E.~Olivieri, and R.~H. Schonmann.
\newblock For 2-{D} lattice spin systems weak mixing implies strong mixing.
\newblock {\em Communications in Mathematical Physics}, 165(1):33--47, October
  1994.

\bibitem{McCoy+Wu-1973}
B.~M. McCoy and T.~T. Wu.
\newblock {\em The two-dimensional {Ising} model}.
\newblock Harvard University Press, January 1973.

\bibitem{Messager+Trotin-1974}
A.~Messager and J.~C. Trotin.
\newblock On analyticity properties of pressure for two-body ising models.
\newblock {\em Il Nuovo Cimento B (1971-1996)}, 19(2):239--252, February 1974.

\bibitem{Messager+Trotin-1976}
A.~Messager and J.-C. Trotin.
\newblock Analyticity properties in ising models.
\newblock {\em Annales de l'I.H.P. Physique théorique}, 24(3):301--321, 1976.

\bibitem{Onsager-1944}
L.~Onsager.
\newblock Crystal {Statistics}. {I}. {A} {Two}-{Dimensional} {Model} with an
  {Order}-{Disorder} {Transition}.
\newblock {\em Physical Review}, 65(3-4):117--149, February 1944.

\bibitem{Schonmann-1994}
R.~H. Schonmann.
\newblock Slow droplet-driven relaxation of stochastic {Ising} models in the
  vicinity of the phase coexistence region.
\newblock {\em Communications in Mathematical Physics}, 161(1):1--49, 1994.

\bibitem{Schonmann+Shlosman-1995}
R.~H. Schonmann and S.~B. Shlosman.
\newblock Complete analyticity for 2d {Ising} completed.
\newblock {\em Communications in Mathematical Physics}, 170(2):453--482, June
  1995.

\bibitem{Stroock+Zegarlinski-1992}
D.~W. Stroock and B.~Zegarlinski.
\newblock The logarithmic sobolev inequality for discrete spin systems on a
  lattice.
\newblock {\em Communications in Mathematical Physics}, 149(1):175--193,
  September 1992.

\bibitem{Yang+Lee-1952}
C.~N. Yang and T.~D. Lee.
\newblock Statistical theory of equations of state and phase transitions. i.
  theory of condensation.
\newblock {\em Phys. Rev.}, 87:404--409, Aug 1952.

\bibitem{Yoshida-1997}
N.~Yoshida.
\newblock Relaxed criteria of the {Dobrushin}-{Shlosman} mixing condition.
\newblock {\em Journal of Statistical Physics}, 87(1):293--309, April 1997.

\end{thebibliography}

\end{document}